\def\a={{\buildrel a \over =}}
\def\na={{\buildrel a \over \neq}}
\def\CC{{\mathbb C}}
\def\PP{{\mathbb P}}
\def\Tcal{{\mathcal T}}
\def\ZZ{{\mathbb Z}}
\def\Lg{\Lambda}
\def\bM{\overline{M}}
\def\bH{\overline{H}}
\newtheorem{theorem}{Theorem}[section]
\newtheorem{lemma}[theorem]{Lemma}
\newtheorem{proposition}[theorem]{Proposition}
\newtheorem{corollary}[theorem]{Corollary}
\newtheorem{definition-lemma}[theorem]{Definition-Lemma}
\theoremstyle{definition}
\theoremstyle{remark}
\newtheorem{remark}[theorem]{\bf Remark}
\def\vandaag{\number\day\space\ifcase\month\or
 januari\or februari\or  maart\or  april\or mei\or juni\or  juli\or
 augustus\or  september\or  oktober\or november\or  december\or\fi,
\number\year}
\def\today{\ifcase\month\or
 Jan\or Febr\or  Mar\or  Apr\or May\or Jun\or  Jul\or
 Aug\or  Sep\or  Oct\or Nov\or  Dec\or\fi
 \space\number\day, \number\year}
\begin{document}

\title[The Class of a Hurwitz Divisor]
{The Class of a Hurwitz Divisor on the Moduli of Curves of Even Genus}
\author{Gerard van der Geer $\,$}
\address{Korteweg-de Vries Instituut, Universiteit van
Amsterdam, Postbus 94248, 1090 GE Amsterdam,  The Netherlands}
\email{G.B.M.vanderGeer@uva.nl}
\author{$\,$ Alexis Kouvidakis}
\address{Department of Mathematics, University of Crete,
GR-71409 Heraklion, Greece}
\email{kouvid@math.uoc.gr}
\subjclass{14H10,14H51}
\begin{abstract}
We calculate the cycle class of the Hurwitz divisor $D_2$ on ${\bM}_g$
for $g=2k$ given by the degree $k+1$ covers of ${\PP}^1$ with
simple ramification points, two of which lie in the same fibre.
We also study some aspects of
the geometry of the natural map from the Hurwitz space
$\overline{H}_{2k,k+1}$ to the moduli space $\overline{M}_{2k}$.
\end{abstract}
\maketitle
\begin{section}{Introduction}\label{sec:intro}
Hurwitz cycles are playing a significant role in the study of
the geometry of the moduli space $M_g$ of curves of genus $g$. For example,
they appeared prominently in the work \cite{HMu} of Harris and Mumford
on the Kodaira dimension of $M_g$. 
Faber and Pandharipande showed in \cite{FP} 
that the cycle classes of Hurwitz loci are tautological. 
For some Hurwitz loci the cycles classes
are known, but for many such loci the cycle classes are still unknown;
for work in this direction see \cite{GF}.

We work over the complex numbers.
The generic curve of even genus $g=2k$ is in finitely many
ways a degree $k+1$ cover of the projective line with $6k$ branch
points. By the Hurwitz-Zeuthen formula these are all simple branch points.
Here simple branch point means that the corresponding fiber
has exactly one simple ramification point. 
The condition that two of the resulting $6k$ ramification 
points lie in the same fibre over the projective line defines a
divisor $D_2$ in $M_{g}$. More precisely, a smooth curve $C$ of genus $g=2k$
defines a point of $D_2$ if it admits a degree $k+1$ map to ${\PP}^1$
with simple ramification points and $6k-1$ branch points.
Similarly, the condition that two
ramification points collide (and then define a triple ramification
point) defines a divisor $D_3$ in $M_{g}$. Their closures give divisors
in $\bM_g$, again denoted by $D_2$ and $D_3$. These divisors
are important and appeared already in the paper \cite{H} of Harris.

Harris calculated the class of $D_3$ in \cite{H} in 1984, but the
class of $D_2$ escaped determination so far. By using a recent result of
Kokotov, Korotkin and Zograf \cite{KKZ} we are now able to calculate 
this class. 
Besides the calculation of the class
$D_2$ with global tools (i.e.\ without the use of test curves),
another purpose of our paper is to study some aspects of the
geometry of the map $\overline{H}_{d,g} \to \overline{M}_g$.

In order to formulate the result we recall that the Picard group with rational 
coefficients of the Deligne-Mumford
stack $\bM_g$ is generated by
the class $\lambda$ of the Hodge bundle and the classes $\delta_j$ of the
boundary divisors $\Delta_j$ for $j=1,\ldots,[g/2]$.

Our result reads as follows.

\begin{theorem}
\label{mainTh}
Let $g=2k$ be an even natural number. The class of $D_2$ on $\bM_{g}$ 
can be written as $c_{\lambda}\lambda + \sum_{j=0}^k c_j \delta_j$
with the coefficients $c_{\lambda}$ and $c_j$ given by
$$
c_{\lambda}= 6\, N \frac{6k-1}{2k-1} (k-2)(k+3),
$$
and
$$
c_0= -\frac{2N}{2k-1}(k-2)(3k^2+4k-1),
$$
and for $1\leq j \leq k$
$$
c_j= -  3N \, \frac{j(2k-j)}{2k-1}(6k^2-4k-7)+
\frac{9}{2} \, j (2k-j) \, \alpha(k,j).
$$
Here $N={2k \choose k+1}/k$ and 
$\alpha(k,j)$ is the combinatorial expression
$$
\alpha(k,j)=\frac{j(2k-j)+k}{k(k+1)} {j \choose [j/2]} {2k-j\choose k-[j/2]}
\qquad \text{for $j$ even}
$$
and
$$
\alpha(k,j)=
\frac{(j+1)(2k-j)}{k(k+1)} {j+1 \choose 1+[j/2]}{2k-j-1 \choose k-1-[j/2]}
\qquad \text{for $j$ odd} \, .
$$
\end{theorem}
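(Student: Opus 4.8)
The plan is to realize $D_2$ as the pushforward of an explicit boundary divisor of the Hurwitz space and then to feed the Hodge-class formula of Kokotov--Korotkin--Zograf \cite{KKZ} into a projection-formula computation, thereby avoiding test curves altogether. First I would fix the compactified space $\bH := \bH_{2k,k+1}$ of admissible degree $k+1$ covers of genus-$0$ curves with $6k$ branch points, equipped with the two natural maps: the branch map $\beta\colon \bH \to \bM_{0,6k}/S_{6k}$ recording the branch locus, and the source map $\pi\colon \bH \to \bM_{2k}$ sending a cover to its stabilized source curve. The map $\pi$ is generically finite, and a general curve of genus $2k$ carries exactly $N=\binom{2k}{k+1}/k$ pencils $g^1_{k+1}$ (a $\rho=0$ Castelnuovo count, equal to the Catalan number $C_k$); this is the geometric degree that will control the pushforward.

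The core of the argument is a boundary analysis on $\bH$ organized by the local monodromy of a degenerating cover. When two simple branch points collide, the two vanishing transpositions $\tau_1,\tau_2$ fall into exactly three cases, and I would track each through $\pi$. If $\tau_1,\tau_2$ are disjoint, the merged fibre carries two distinct simple ramification points, and the corresponding boundary divisor maps onto $D_2$. If $\tau_1,\tau_2$ share a letter, their product is a $3$-cycle, producing a triple ramification point, so the divisor maps onto $D_3$, whose class is already known from Harris \cite{H}. If $\tau_1=\tau_2$, the source curve is forced to acquire a node, and the divisor maps onto a boundary divisor $\Delta_j$ of $\bM_{2k}$. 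The multiplicities in this last case are precisely the source of $\alpha(k,j)$: they count admissible covers of a two-component base whose source has a genus-$j$ and a genus-$(2k-j)$ part meeting at one node, with the pencil distributing across the two sides through the node. This distribution forces the degree on each component to be near $k/2$, producing the binomial factors $\binom{j}{[j/2]}\binom{2k-j}{k-[j/2]}$, and the parity of $j$ governs whether the node is itself a ramification point of the pencil, which explains the separate even and odd formulas for $\alpha(k,j)$.

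Next I would invoke \cite{KKZ}, which expresses the Hodge class $\lambda$ on $\bH$ as an explicit $\QQ$-linear combination of the boundary divisors of $\bH$, with coefficients coming from the local behaviour of the Bergman tau function near each stratum. Applying $\pi_*$ to this relation and using the projection formula together with the degree data assembled above yields a single identity in $\mathrm{Pic}(\bM_{2k})\otimes\QQ$ relating $N\lambda$, $[D_2]$, $[D_3]$ and the classes $\delta_j$. Since $[D_3]$ and all the intersection-theoretic degrees are explicitly known, this identity determines $[D_2]$ uniquely once everything is expressed in the basis $\lambda,\delta_0,\ldots,\delta_k$.

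The final step is bookkeeping: substitute the known class of $D_3$, collect the $\lambda$- and $\delta_j$-contributions, and read off $c_\lambda$, $c_0$ and $c_j$. I expect the main obstacle to be the boundary analysis of the second step --- in particular, the precise enumeration of the boundary divisors of $\bH$, the determination of the ramification profile of $\pi$ along each, and the resulting combinatorial counts giving $\alpha(k,j)$ together with the correct parity dichotomy. The \cite{KKZ} input and the subsequent linear algebra should then be comparatively mechanical, apart from verifying the closed-form evaluation of the Castelnuovo-type counts that collapse the raw monodromy enumeration into the stated binomial expressions.
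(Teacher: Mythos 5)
Your top-level strategy is exactly the paper's: push the Kokotov--Korotkin--Zograf expression for $\lambda_H$ on the compactified Hurwitz space forward under $\pi$, substitute Harris's known class of $D_3$, and solve the resulting relation in $\mathrm{Pic}(\bM_{2k})\otimes\QQ$ for $[D_2]$. The gap is in the step you yourself identify as the crux, the boundary analysis, which as sketched would fail. Your trichotomy of the two vanishing transpositions $\tau_1,\tau_2$ only describes boundary divisors of $\bH_{2k,k+1}$ lying over strata of $\bM_{0,6k}$ where exactly \emph{two} branch points collide, and in that regime the case $\tau_1=\tau_2$ produces a divisor dominating $\Delta_0$ only (the source is an irreducible one-nodal curve). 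The divisors dominating $\Delta_j$ for $1\le j\le k$ do not arise from two colliding branch points at all: a dimension count forces $3j$ branch points onto one component of the base, and for each such $j$ there are $[j/2]+1$ distinct divisors $E_{j,c}$, $c=0,\dots,[j/2]$, whose general member has a genus-$(2k-j)$ component of degree $k+1-c$, a genus-$j$ component of degree $j+1-c$, rational tails making up the rest, and node of ramification index $j+1-2c$. Without this enumeration the pushforward of the KKZ formula cannot produce the $\delta_j$-coefficients.

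Consequently your proposed origin of $\alpha(k,j)$ is also not correct. In the paper, $\alpha(k,j)$ is the closed-form evaluation of the weighted sum over all the divisors $E_{j,c}$,
$$
\alpha(k,j)=A_3(j)=\frac{1}{(j+1)(2k+1-j)}\sum_{c=0}^{[j/2]}(j+1-2c)^{3}\binom{j+1}{c}\binom{2k-j+1}{k+1-c},
$$
where $(j+1-2c)^{2}$ is the Harris--Mumford count giving the degree of $E_{j,c}\to\Delta_j$ and one further factor $(j+1-2c)=m(\mu)$ comes from the KKZ coefficient attached to the ramification profile over the node. The even/odd dichotomy in the Theorem is merely the outcome of evaluating this binomial sum; it is not explained by ``whether the node is a ramification point,'' and no single count with degrees ``near $k/2$'' can replace the sum over $c$. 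Finally, you leave untreated the multiplicity questions that occupy a large part of the paper and directly enter the coefficients: irreducibility of the boundary divisors (proved via trace curves and a pure-braid-group monodromy argument), the degrees $\deg(\pi|_{E_3})=(6k)!/2$ and $\deg(\pi|_{E_2})=(6k)!$ (proved by exhibiting a point of $D_3$, resp.\ $D_2$, with a unique, reduced preimage), and the stack-theoretic corrections $\pi_*[E_0]=\tfrac{1}{2}\deg(\pi)\,\delta_0$, $\pi_*[E_3]=\tfrac{(6k)!}{2}[D_3]$, and $\pi_*[E_2]=\tfrac{(6k)!}{2}[D_2]$, the last of which requires the local computation showing $\bH_{2k,k+1}$ has a normal-crossing singularity along $E_2$ together with the extra automorphisms there. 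Omitting any of these factors of $2$ or degree formulas changes every coefficient in the final answer.
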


\end{section}
\begin{section}{The Hurwitz Scheme}
\label{section Hurwitz scheme}
We call a degree $d$ cover $C_1\to C_2$ of Riemann surfaces
simple if every fibre has at least $d-1$ distinct points.
Let $H_{g,d}$ be the Hurwitz scheme of simple covers of
the projective line ${\PP}^1$ of degree $d$ and genus $g$ with 
ordered branch points and $\overline{H}_{g,d}$ the 
compactification of the Hurwitz scheme by the admissible covers 
with an ordering of the branch points, see \cite{HMu}, p.\ 57.
This is an irreducible projective scheme. 
Recall that two admissible covers $f_i : C_i \to P_i$ are considered
equivalent if there exist isomorphisms $h: C_1 \to C_2$
and $\gamma: P_1 \to P_2$ (preserving the markings) with $f_2 \circ h= 
\gamma \circ f_1$.

In this paper we restrict to the case of even genus $g=2k$ 
and degree $d=k+1$.
Then the Brill-Noether number of linear systems of projective
dimension $r=1$ and degree $d$ equals $\rho= g-(r+1)(g+r-d)=0$.
By the Hurwitz-Zeuthen formula the number of (simple) branch points
is $b=6k$ and the dimension $3g-3$ of the Hurwitz scheme equals that of
$\bM_g$.

There is a natural map
$\pi : \overline{H}_{g,d} \to \bM_g$ with $\bM_g$ the moduli space of stable
curves of genus $g$, defined by contracting the unstable rational
components of an admissible cover.
Moreover, there is also a natural map $q$ of $\overline{H}_{g,d}$
to the moduli space $\bM_{0,b}$ of stable curves of genus $0$
with $b$ marked points. The Hurwitz space thus forms a 
correspondence between ${\bM}_{2k}$ and ${\bM}_{0,6k}$:
\begin{displaymath}
\begin{xy}
\xymatrix{
\bH_{g,d} \ar[r]^{q} \ar[d]^{\pi}& {\bM_{0,6k}} \\
\bM_{g} \\
}
\end{xy}
\end{displaymath}
For a general curve $C$ of genus $g=2k$ the number of $g^1_d$'s 
with $d=k+1$ equals $N={2k \choose k+1}/k$, 
and the natural map $\pi: \overline{H}_{g,d}\to\bM_g$
is generically finite of degree $(6k)! \, N$.

The boundary $\bH_{g,d}-H_{g,d}$ consists of a finite number of
divisors. 
An irreducible divisor in the boundary of $\bH_{g,d}$ maps under $q$ to
an irreducible divisor in the boundary of $\bM_{0,b}$. 
The irreducible boundary divisors of
$\bM_{0,b}$ correspond bijectively to 
the decompositions $\{1,\ldots,b\}= \Lg \sqcup \Lg^c$
into two disjoint subsets $\Lg, \Lg^c$, each with at least two elements. 
We shall write $S^{\Lg}$ for such a boundary divisor with the rule 
that $S^{\Lg}=S^{\Lg^c}$.
The generic member of $S^\Lg$ is a stable rational curve with two
irreducible components, ${\PP}_1$ and ${\PP}_2$, meeting in a point $s$
such that the marked points corresponding to $\Lg$ all lie on one 
of ${\PP}_1$ and ${\PP}_2$.

Under the map $\pi:\overline{H}_{g,d}\to\bM_g$ an irreducible boundary 
divisor of $\bH_{g,d}$ either maps to the boundary of 
$\bM_g$ or has a non-empty intersection with $M_g$. 
We first determine the boundary divisors
that map dominantly to an irreducible divisor in the boundary of 
$\bM_g$; in a later section we determine the irreducible components
of these divisors. Recall that the boundary $\bM_g-M_g$ of
$\bM_{g}$ consists of the irreducible divisors $\Delta_j$ with
$0\leq j \leq [g/2]$, where the generic element of $\Delta_0$ is an 
irreducible one-nodal curve and the generic element of $\Delta_j$
is a curve with two irreducible components of genus $j$ and $g-j$ meeting 
in one point.
\end{section}
\begin{section}{Boundary Divisors mapping to the boundary of $M_g$}
\label{section btob}
We determine which divisors in the boundary of $\bH_{g,d}$ with
$g=2k$ and $d=k+1$ map 
dominantly to an irreducible boundary divisor of $\bM_g$.

\begin{proposition}\label{description}
Let $0 \leq j \leq k$. There are $[j/2]+1$ boundary
divisors $E_{j,c}$ with $c=0,\ldots, [j/2]$ 
mapping dominantly to $\Delta_j$ under $\pi:\overline{H}_{g,d}\to\bM_g$.

\item{\rm i)} For $j\geq 1$ the divisor $E_{j,c}$ decomposes as
$\sum_{\Lg} E_{j,c}^{\Lg}$ with $\Lg$ running over the subsets 
of $\{1,\ldots,6k\}$ of cardinality $3j$, where we identify $\Lg$
with $\Lg^c$ if $j=k$. 
The general element $\varphi: X \to P$ 
of $E_{j,c}^{\Lg}$ maps to a curve $P={\PP}_1 \cup {\PP}_2$
with ${\PP}_1$ (resp.\ ${\PP}_2$) carrying the $6k-3j$  (resp.\ $3j$) 
marked points of $\Lambda^c$ (resp.\ $\Lambda$).
The inverse image of ${\PP}_1$ consists of a smooth curve $C_1$ of 
genus $2k-j$ and $c$ smooth rational curves $R_1,\ldots, R_c$, 
while the inverse image of ${\PP}_2$ consists of a smooth curve 
$C_2$ of genus $j$ and $k-j+c$ smooth rational curves 
$S_1,\ldots ,S_{k-j+c}$.  Each $R_{\mu}$ meets $C_2$ in a single point 
$q_{\mu}$ and each $S_{\nu}$ meets $C_1$ in a single point $p_{\nu}$.
The curves $C_1$ and $C_2$ meet in a single point $p$.

The map $\varphi$ restricted to $C_1$ (resp.\ $C_2$) 
has degree $k+1-c$ (resp.\ $j+1-c$) and has degree $1$ on the 
$R_{\mu}$ and $S_{\nu}$. The $p_{\nu}$ and $q_{\mu}$ are not ramification
points, but $p$ has ramification degree $j+1-2c$ and the points
$q_{\mu}, p_{\nu}, p$ all map to $s$.
\begin{center}
\begin{pspicture}(-3,-2)(3,3)
\psline[linecolor=red](-0.6,-1.2)(3,0) 
\psline[linecolor=blue](-3,0)(0.6,-1.2) 
\psline[linecolor=red](-0.6,0.8)(3,2) 
\psline[linecolor=blue](-3,2)(0.6,0.8) 
\psline[linecolor=red](-3,1.4)(-2,1.8)
\psline[linecolor=red](-2.5,1.25)(-1.5,1.65)
\psline[linecolor=red](-2,1.1)(-1,1.5)
\psline{->}(0,0.5)(0,-0.5)
\psline[linecolor=blue](2,1.8)(3,1.5) 
\psline[linecolor=blue](1,1.5)(2,1.2)  
\rput(-3.5,2){$C_1$}
\rput(3.5,2){$C_2$}
\rput(-3.5,0){${\PP}_1$}
\rput(3.5,0){${\PP}_2$}
\rput(0,1.3){$p$}
\rput(0,-1.2){$s$}
\rput(-3,1.1){$S_{1}$}
\rput(-2.5,0.9){$\cdots$}
\rput(-1.7,0.8){$S_{k-j+c}$}
\rput(2.5,1){$R_{1}$}
\rput(3,1.1){$\cdots$}
\rput(3.5,1.2){$R_{c}$}
\end{pspicture}
\end{center}


\item{\rm ii)} For $j=0$ the divisor $E_0=E_{0,0}$ decomposes as
$\sum_{\Lg} E_0^{\Lg}$ with $\Lg$ running through the $2$-element subsets of 
$\{1,\ldots,6k\}$. The general element $\varphi: X \to P$ 
of $E_0^{\Lg}$ 
has $P={\PP}_1 \cup {\PP}_2$ with ${\PP}_1$ (resp.\ ${\PP}_2$) 
carrying the $2$ (resp.\ $6k-2$) marked points of 
$\Lambda$ (resp.\ $\Lambda^c$).
The inverse image of ${\PP}_1$ consists of a smooth rational curve
$C_1$ and $k-1$ smooth rational curves $R_1,\ldots,R_{k-1}$
while the inverse image of ${\PP}_2$ is a smooth curve $C_2$
of genus $2k-1$.
The curves $C_1$ and $C_2$ meet in two points $p$ and $q$ and
each $R_{\mu}$ meets $C_2$ in a single point $p_{\mu}$ that is
not a ramification point on $C_2$.

The map $\varphi$ restricted to $C_2$ has degree $k+1$, while restricted
to $C_1$ it has degree $2$ and degree $1$ on the $R_{\mu}$.

\begin{center}
\begin{pspicture}(-3,-2)(3,3)
\psline[linecolor=red](-0.6,-1.2)(3,0) 
\psline[linecolor=blue](-3,0)(0.6,-1.2) 
\pscurve[linecolor=blue](-3,1.8)(0,1)(0.5,0.4)(-0.6,0) 
\pscurve[linecolor=red](3,1.8)(0,1)(-0.5,0.4)(0.6,0) 
\psline{->}(0,-0.1)(0,-0.6)
\psline[linecolor=blue](2,1.8)(3,1.5)
\psline[linecolor=blue](1,1.5)(2,1.2)
\rput(-3.5,2){$C_1$}
\rput(3.5,2){$C_2$}
\rput(-3.5,0){${\PP}_1$}
\rput(3.5,0){${\PP}_2$}
\rput(0,1.3){$p$}
\rput(0,0.3){$q$}
\rput(0,-1.2){$s$}
\rput(2.5,1.1){$\ldots$}
\rput(2.1,1){$R_1$}
\rput(3.4,1.3){$R_{k-1}$}
\end{pspicture}
\end{center}
\end{proposition}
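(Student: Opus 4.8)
The plan is to describe a general admissible cover in a boundary divisor of $\bH_{g,d}$ and to read off its combinatorial type from Riemann--Hurwitz together with a Brill--Noether dimension count.

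First I would reduce to a two-component base. As $q$ carries boundary divisors of $\bH_{g,d}$ to boundary divisors of $\bM_{0,6k}$, the general point of a boundary divisor $E$ has base $P={\PP}_1\cup_s{\PP}_2$ with the marked points split by a partition $\{1,\dots,6k\}=\Lg\sqcup\Lg^c$; write $m=|\Lg|$ for the number lying on ${\PP}_2$. Since $q$ is generically finite and $E$ dominates $S^{\Lg}$, one has $\dim E=6k-4$. The cover is the gluing of a degree $k+1$ admissible cover of ${\PP}_1$ to one of ${\PP}_2$ along the fibre over $s$, of common ramification profile a partition $\mu\vdash k+1$. I would split $X$ into its connected components over each ${\PP}_i$ and apply Riemann--Hurwitz to each, recording its degree, the marked points it meets, and its part of $\mu$.

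Next I would determine the type forced by the requirement that $E$ dominate $\Delta_j$. As $\dim E=\dim\Delta_j=6k-4$, the generic stable model is a general point of $\Delta_j$, hence two smooth curves of genera $2k-j$ and $j$ meeting in one node. This concentrates all genus in a single component $C_1$ over ${\PP}_1$ and a single $C_2$ over ${\PP}_2$, the remaining components being rational; for a single node to survive stabilisation these must be contractible tails absorbing the unramified points of the fibre over $s$, which forces the cross-attachment pattern of the picture (a degree-one point of $C_1$ over $s$ cannot be glued directly to $C_2$, lest a second node appear and the arithmetic genus exceed $2k$). Writing $a$ for the common part of $\mu$ at the surviving node $p$ and letting $c\ge 0$ be the number of degree-one tails $R_\mu$ attached to $C_2$, the degree $k+1$ on each side gives $\mu=(a,1^{k-j+2c})$; Riemann--Hurwitz on $C_1$ and on $C_2$ then yields $\deg(C_1)=k+1-c$, $\deg(C_2)=j+1-c$, $a=j+1-2c$, and pins the branch split to $m=3j$ and $6k-3j$.

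The heart of the matter, and the step I expect to be the main obstacle, is to show the admissible $c$ are exactly $0\le c\le[j/2]$ and that each yields a genuine dominant divisor. For $E_{j,c}$ to dominate $\Delta_j$ the curve $C_1$ must be general of genus $2k-j$ with a $g^1_{k+1-c}$, so Brill--Noether demands $\rho(2k-j,1,k+1-c)=j-2c\ge 0$, that is $c\le[j/2]$; the same bound comes from $C_2$. For the converse finiteness I would use the dimension balance: the excess $\rho=j-2c$ is exactly consumed by the requirement that the node $p$ be a ramification point of index $a=j+1-2c$, a condition of codimension $a-1=j-2c$ in the space of $g^1$'s on a general curve, so the number of covers over a general point of $\Delta_j$ is finite precisely when $j-2c\ge 0$. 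Checking that higher-degree tails, several positive-genus components, or extra nodes over $s$ contribute only in lower dimension is the delicate bookkeeping here. Finally, the case $j=0$ must be handled apart: the recipe of part i) would turn the genus-$0$ component into a rational tail that contracts away, leaving a smooth curve, so instead the divisor over $\Delta_0$ comes from a degree-two rational bridge $C_1$ meeting a genus-$(2k-1)$ curve $C_2$ in two unramified points, whose contraction produces an irreducible one-nodal curve; this is part ii), with $m=2$. The count $[j/2]+1$ is then the number of admissible $c$, and remembering that the $6k$ branch points are labelled gives the decomposition $E_{j,c}=\sum_{\Lg}E_{j,c}^{\Lg}$ over the $3j$-subsets $\Lg$, with $\Lg$ identified with $\Lg^c$ when $j=k$.
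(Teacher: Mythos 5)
Your outline is a direct-classification argument: reduce to a two-component base, pin down the combinatorial type by Riemann--Hurwitz, bound $c$ by Brill--Noether, and then rule out every other configuration by dimension counts. The types you arrive at and the bound $c\le[j/2]$ (from $\rho(2k-j,1,k+1-c)=j-2c\ge 0$) agree with the Proposition, and your Riemann--Hurwitz bookkeeping (profile $(j+1-2c,1^{k-j+2c})$ over $s$, degrees $k+1-c$ and $j+1-c$, branch split $6k-3j$ and $3j$) is correct. But the step you yourself flag as ``the delicate bookkeeping'' --- showing that several positive-genus components over one side, tails of higher degree, extra nodes over $s$, a ramified point of $C_1$ glued directly to $C_2$, or bases with more than two components all occur only in dimension $<6k-4$ --- is exactly where the content of the Proposition lies, since the statement asserts that these $[j/2]+1$ divisors are the \emph{only} ones dominating $\Delta_j$. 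Your proposal defers this completeness step rather than carrying it out, so as written there is a genuine gap. A second, smaller gap: existence and finiteness of the $E_{j,c}$ needs more than $\rho\ge 0$ for a plain $g^1_{k+1-c}$; one needs that a \emph{generic pointed} curve $(C_1,p)$ carries finitely many, and at least one, pencils of degree $k+1-c$ with ramification index exactly $j+1-2c$ at $p$. Your ``codimension $a-1$'' remark is the right heuristic, but it is precisely the content of Harris--Mumford's Theorem A (equivalently, Eisenbud--Harris theory), which must be invoked rather than rederived by an informal codimension count.

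The paper's proof is organized so as to avoid your hard step entirely, which is why Propositions \ref{description} and \ref{degree} are proved together. It first produces the $E_{j,c}$ and computes the degree of $\pi_{j,c}:E_{j,c}\to\Delta_j$ as $(6k)!\,a(j,j+1-c)\,a(2k-j,k+1-c)$, using the explicit counts from Theorem A, and then uses the identity
$$
\sum_{c=0}^{[j/2]}\frac{(j+1-2c)^2}{(j+1)(2k-j+1)}\binom{j+1}{c}\binom{2k-j+1}{k+1-c}
=\frac{1}{k}\binom{2k}{k+1}=N
$$
to conclude that the degrees of the known divisors already exhaust the full degree $(6k)!\,N$ of $\pi$; since $\overline{H}_{g,d}$ is irreducible and $\overline{M}_g$ is projective, irreducible and smooth in codimension $2$, there is ``no room'' left for any further divisor dominating $\Delta_j$. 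For $j=0$ the same accounting works, with the missing factor $2$ supplied by the fact that $\pi$ is simply ramified along $E_0$. This global degree count is what replaces the exhaustive case analysis your plan would require. To complete your route you would have to do that case analysis in full; alternatively, you can keep your existence argument (made rigorous via Theorem A) and import the paper's degree identity as the completeness step.
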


\begin{remark}
Later we shall prove that the $E_{j,c}^{\Lambda}$  and $E_0^{\Lambda}$
are irreducible.
\end{remark}

We also need to determine the degree of the restriction of $\pi$ 
to the divisor $E_{j,c}$.

\begin{proposition}\label{degree}
\item{i)} For $j>0$ the degree of the restriction 
$\pi_{j,c}:E_{j,c}\to \Delta_j$ of $\pi$ to  $E_{j,c}$ is
$$
\frac{(6k)! \, (j+1-2c)^2}{(j+1)(2k-j+1)}{j+1 \choose c} 
{2k-j+1 \choose k+1-c}.
$$
\item{ii)} For $j=0$ the degree of the restriction $\pi_0: E_0\to \Delta_0$
of $\pi$ to $E_0$ is
$$
\frac{(6k)!}{2k} {2k \choose k+1}= \frac{(6k)!}{2}\, N\,.
$$ 
\end{proposition}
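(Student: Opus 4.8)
The plan is to compute the degree of $\pi_{j,c}: E_{j,c} \to \Delta_j$ by counting, over a generic point $[C] = [C_1 \cup_p C_2] \in \Delta_j$ with $C_1$ of genus $2k-j$ and $C_2$ of genus $j$, the number of admissible covers $\varphi: X \to P$ in $E_{j,c}$ lying over it, weighted by the factorial coming from the ordering of the $6k$ branch points. Recall from Proposition~\ref{description} that such a $\varphi$ restricts to a degree $k+1-c$ map on $C_1$ and a degree $j+1-c$ map on $C_2$, totally ramified to order $j+1-2c$ at the node $p$, with the remaining sheets accounted for by the rational tails $R_\mu, S_\nu$ that map isomorphically to the two components of the base. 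First I would fix the images: the point $s \in P = \PP_1 \cup \PP_2$ is forced (it is the image of $p$ and of all the $q_\mu, p_\nu$), and the $6k$ branch points are distributed $6k-3j$ on $\PP_1$ and $3j$ on $\PP_2$ as dictated by $\Lg$. The factor $(6k)!$ then arises because we are counting covers with \emph{ordered} branch points, whereas the underlying geometry only sees the unordered branch divisor.

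The heart of the count is therefore the number of linear series realizing the prescribed ramification on each component, for a \emph{fixed} branch locus. On $C_1$ I would count $g^1_{k+1-c}$'s having a point of total ramification of order $j+1-2c$ at $p$ and simple ramification at the $6k-3j$ prescribed branch points; on $C_2$, likewise, $g^1_{j+1-c}$'s totally ramified to order $j+1-2c$ at $p$ and simply ramified at the $3j$ branch points. By the Brill--Noether and Plücker (ramification) count, each of these numbers should be computed as a degree of a Schubert-type intersection or, more efficiently, via the known formula for the number of $g^1_d$'s on a general curve with an assigned ramification point of fixed order. The expected outcome is that the count on $C_1$ contributes a factor proportional to $\binom{2k-j+1}{k+1-c}$ and that on $C_2$ a factor proportional to $\binom{j+1}{c}$, with the $(j+1-2c)^2$ and the denominators $(j+1)(2k-j+1)$ emerging from the Plücker formula for the number of linear series with a point of assigned ramification. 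Finally I would multiply the two counts together with the combinatorial factor for choosing which of the global sheets become the rational tails, and reconcile with the ordering factor $(6k)!$.

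The main obstacle is the ramification bookkeeping at the node: one must check that gluing a $g^1_{k+1-c}$ on $C_1$ to a $g^1_{j+1-c}$ on $C_2$ along the node yields an admissible cover precisely when the ramification orders at $p$ match (both equal to $j+1-2c$), and that the attaching data of the rational tails is rigid, i.e.\ contributes no extra moduli but does contribute to the combinatorial multiplicity. I would verify that the number of ways to attach the $c$ tails $R_\mu$ to $C_2$ and the $k-j+c$ tails $S_\nu$ to $C_1$, together with the sheet-matching across the node, produces exactly the binomial coefficients $\binom{j+1}{c}\binom{2k-j+1}{k+1-c}$; this is where the choice of which sheets are totally ramified versus split into tails gets encoded. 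For part~(ii), the case $j=0$, the structure degenerates: $C_2$ has genus $2k-1$ and carries a $g^1_{k+1}$, while the rational curve $C_1$ meets $C_2$ in \emph{two} points (rather than through a single ramified node), so the ramification count at a single node is replaced by counting $g^1_{k+1}$'s on a general curve of genus $2k-1$ passing appropriately through the two attaching points; this yields the cleaner answer $\frac{(6k)!}{2k}\binom{2k}{k+1}$, where the factor $N = \binom{2k}{k+1}/k$ reappears and the extra $1/2$ accounts for the symmetry exchanging the two nodes $p,q$.

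Throughout, I would keep the argument global by expressing these counts as intersection numbers of Schubert cycles in the relevant Brill--Noether loci, which both avoids test-curve computations and makes the symmetry $c \leftrightarrow (j+1-c)$ manifest when $j+1-2c$ is replaced by its absolute value; the final verification is that all the pieces assemble into the stated closed form. I expect the Plücker ramification formula to do most of the work once the combinatorial attaching data is correctly identified.
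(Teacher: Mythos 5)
Your overall strategy is the same as the paper's: compute the degree as a fibre count over a generic point $[C_1\cup_p C_2]\in\Delta_j$, where the essential enumerative input is the number of pencils on a general \emph{pointed} curve with one point of prescribed total ramification and otherwise simple branching (this is Theorem A of Harris--Mumford, which is exactly the ``known formula'' you gesture at), multiply the two counts and the ordering factor $(6k)!$. But two of your steps are wrong as stated and would derail the computation if carried out literally. First, your phrase ``for a \emph{fixed} branch locus'' sets up the wrong count: with the branch points fixed one would be computing a Hurwitz number, not a Brill--Noether/Pl\"ucker number. The correct setup (and the one your second formulation describes) fixes only the generic pointed curves $(C_1,p)$ and $(C_2,p)$ and lets the branch points fall where they may; the $(6k)!$ then labels them. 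Second, and more seriously, there is \emph{no} extra combinatorial factor for attaching the rational tails, and the binomial coefficients do \emph{not} come from ``sheet-matching'' or from choosing which sheets become tails. The Harris--Mumford count is already $a(g,d)=\frac{2d-g-1}{g+1}\binom{g+1}{g+1-d}$, so the two pencil counts $\frac{j+1-2c}{j+1}\binom{j+1}{c}$ on $C_2$ and $\frac{j+1-2c}{2k-j+1}\binom{2k-j+1}{k+1-c}$ on $C_1$ produce the stated formula in full once multiplied by $(6k)!$; the tails are rigid, each unramified point of the fibre over $s$ receiving a unique degree-one tail, so they contribute multiplicity $1$. Any additional attaching factor would be fatal: the identity $\sum_{c=0}^{[j/2]}\frac{(j+1-2c)^2}{(j+1)(2k-j+1)}\binom{j+1}{c}\binom{2k-j+1}{k+1-c}=N$ forces $\sum_c\deg\pi_{j,c}=(6k)!\,N=\deg\pi$, and a generic point of $\Delta_j$ cannot have more than $\deg\pi$ preimages; this equality is also what the paper uses to rule out further divisors over $\Delta_j$.

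In part (ii) your explanation of the factor $1/2$ is incorrect. Since the degree is that of a map from a space of covers with \emph{ordered} branch points, the halving must come from a nontrivial stabilizer of a generic point of $E_0$ inside ${\mathbb S}_{6k}$: the admissible cover admits an automorphism that is the identity on $C_2$, fixes \emph{both} nodes $p$ and $q$, and exchanges the two simple branch points on ${\PP}_1$ (in coordinates where $C_1\to{\PP}_1$ is $z\mapsto z^2$ with $p,q$ over $s$, it is $z\mapsto 1/z$), so the two labellings of $\Lg$ give the same point of $\bH_{g,d}$. The symmetry you invoke --- exchanging the two nodes $p,q$ --- is the covering involution $z\mapsto -z$ of $C_1$, and it does \emph{not} extend to an automorphism of the admissible cover, since that would require an automorphism of the generic curve $C_2$ interchanging $p$ and $q$; it only serves to cancel the apparent two-fold choice of how to glue $\{p,q\}$ to the fibre of $C_1$ over $s$. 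You get the right number, but by the wrong mechanism, and the distinction matters precisely because the branch points are ordered. Note also that for the Griffiths--Harris count $N$ of pencils with $\gamma\geq p+q$ to equal the number of covers in the fibre, one needs that these pencils are unramified at $p$ and $q$; this genericity statement (which the paper deduces again from Theorem A) is missing from your outline.
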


\begin{proof} We shall prove the two propositions \ref{description} and 
\ref{degree} at the same time.

\item{i)} Suppose $j \geq 1$. We first show that the above  loci 
$E_{j,c}$ in $\bH_{g,d}$ defined by describing their general element 
are divisors in the boundary of 
$\bH_{g,d}$. We apply \cite{HMu}, Theorem A, p.\ 71 
[note that there is a misprint in the formulas (*) there: instead of 
$h^0(L(-2d-g-1)p) \geq 1$ one should read: $h^0(L[-(2d-g-1)]p) \geq 1$] 
with $g=j$ (resp.\ $g=2k-j$) and  $d=d_2=j+1-c$ (resp.\ $d=d_1=k+1-c$).
For $g=j$ we have in the notation of loc.\ cit.\ ${\rm min}\,d=j/2+1$ and
${\rm max}\,d=j+1$. Similarly for $g=2k-j$ we have
${\rm min}\,d= (2k-j)/2+1$ and ${\rm max}\,d=k+1\leq 2k-j+1$. 
Hence the range of $d$ satisfies the requirements of the theorem.  
Observe  also that $2d-g-1=j+1-2c$.  
The theorem then implies that the generic pair $(C_2,p)$ with
$C_2$ of genus $j$ and $p$ a point of $C_2$ can be expressed 
in
$$
a(j,d_2)=\frac{j+1-2c}{j+1}\;{j+1 \choose c} 
$$
ways
as a ramified cover of ${\PP}^1$ of degree $d_2$
with all branch points simple except the image of $p$ over which  $p$
is the only ramification point with degree $j+1-2c$.
Similarly, the generic pair $(C_1,p)$ with $C_1$ of genus $2k-j$
and $p \in C_1$ can be expressed in
$$
a(2k-j, d_1)= \frac{j+1-2c}{2k-j+1}\; {2k-j+1 \choose k+1-c}
$$ 
ways as a ramified cover of ${\PP}^1$ of degree $d_1$
with all branch points simple except the image of $p$ over which $p$
is the only ramification point with degree $j+1-2c$.
By a dimension count we have now that the 
locus $E_{j,c}$ is pure of codimension $1$ in 
$\bH_{g,d}$ and hence $E_{j,c}$ 
defines a divisor. 

The degree of the restricted map $\pi_{j,c}: E_{j,c} \to \Delta _j$  
is given by 
$$
(6k)!\, a(j, d_2)\, a(2k-j, d_1)
$$ 
and this equals
$$
(6k)!\,  \frac{j+1-2c}{j+1}\;{j+1 \choose c} \, 
\frac{j+1-2c}{2k-j+1}\; {2k-j+1 \choose k+1-c}\,.
$$
But by the identity
$$
\sum_{c=0}^{[j/2]}\frac{(j+1-2c)^2}{(j+1)(2k-j+1)} 
{j+1 \choose c}{2k-j+1 \choose k+1-c}=\frac{1}{k} {2k \choose k+1}
$$
we have $\sum_{c=0}^{[j/2]} \deg \pi_{j,c} =\deg \pi=N$
and since $\bH_{g,d}$
is projective and irreducible and $\overline{M}_g$ 
is projective, irreducible and smooth in codimension $2$, 
there is no room for other divisors 
in the boundary mapping dominantly to $\Delta_j$.

\item{ii)} 
For $j=0$ the analysis gives that the curve described in the Proposition
\ref{description} ii)
is a general member of a divisor which maps to $\Delta_0$. 
Indeed, in this situation  $\rho =1$ and hence the curve $C_2$ possesses 
a $g^1_{k+1}$ passing through two generic points: 
the pre-image of the space $W^1_{k+1}$ in ${\rm Sym}^{k+1}C_2$ is 
$2$-dimensional and hence intersects $p+q+{\rm Sym}^{k-1}C_2$ 
(which of class $x^2$, where $x$ is the ample class representing 
the divisor $p+{\rm Sym}^{k}C_2$, see \cite{ACGH} Ch.\ VII, Prop.\ 2.2)  
for every choice of $p,q$.
The maps $C_i \to {\PP}_i$ 
are not ramified at the points $p,q$. Indeed, by the above 
mentioned Theorem A in \cite{HMu}, a generic couple $(C_2,p)$, with 
$g(C_2)=2k-1$, possesses a finite number of pencils $\gamma$ 
of degree $k+1$ with  
$\gamma \geq 2p$. Therefore, for a generic $q$ there is no such 
pencil with $\gamma \geq 2p+q$. By \cite{GH}, 
Main Theorem 2c, p.\ 235, there are $\frac{1}{k}{2k\choose k+1}=N$ 
distinct such linear systems.  

Note that the symmetric group ${\mathbb S}_{6k}$ 
does not act freely on $E_0$,  since we can compose $\pi$ with 
the automorphism of
$P=\PP_1 \cup \PP_2$ that is the identity on $\PP_2$, 
fixes the intersection point $s$ and exchanges the 
two branch points on ${\PP}_1$. This lifts to an automorphism of
$C_1$ fixing $p$ and $q$ and interchanging the ramification points.
Therefore the degree of the 
restricted map $\pi_0: E_0 \to \Delta_0$ is 
$\frac{(6k)!}{2}\,N$,   which is half of the   
generic degree of the map  $\pi: \overline{H}_{d,g} \to \bM_g$. 
On the other hand, a local analysis shows, 
see  \cite{HMu}, bottom of p.\ 76, that the map $\pi$ is 
simply ramified along the divisor $E_0$.
This shows that $E_0$ is a divisor in the boundary
which maps dominantly to $\Delta_0$ and there is no room for other 
divisors.
\end{proof}
\end{section}
\begin{section}{Boundary divisors not mapping to the boundary of $M_g$}
\label{section btos}
We now determine the divisors in the boundary $\bH_{g,d}$ that map dominantly
to a divisor in $\bM_g$ that hits $M_g$.

\begin{proposition}\label{description2}
There are two divisors  $E_2$ and $E_3$ in the boundary of $\bH_{g,d}$
that under $\pi$ map dominantly 
to a divisor in $\bM_g$ that has non-zero intersection with $M_g$.
We have a decomposition 
$ E_2= \sum_{\Lg,\alpha} E_2^{\Lg,\alpha}$ 
into $2 {6k \choose 2}$ divisors
with  two possibilities for $\alpha$ and 
$\Lg \subset \{1,\ldots,6k\}$ and $\# {\Lg} =2$. 
Similarly, we have a decomposition $E_3=\sum_{\Lg} E_3^{\Lg}$
in ${6k \choose 2}$ divisors.
Their description is as follows.

\item{i)} Each general member $\varphi:X\to P$ of $E_2^{\Lg,\alpha}$ maps to a 
curve $P={\PP}_1 \cup {\PP}_2$ with ${\PP}_1$ (resp.\ ${\PP}_2$)
carrying the $2$ (resp.\ $6k-2$) marked points of $\Lambda$ 
(resp.\ $\Lambda^c$). The inverse image of
${\PP}_2$ is a smooth curve $C$ of genus $g$ mapping with degree $k+1$
to ${\PP}_2$, while the inverse image
of ${\PP}_1$ consists of $k-3$ smooth rational curves $R_1,\ldots,R_{k-3}$
mapping with degree $1$ to ${\PP}_1$ and two smooth rational curves
$S_1,S_2$ mapping with degree $2$ to ${\PP}_1$. 
The intersection points $q_i$ of $S_i$ with $C$ are ramification points
and $\alpha$ is a marking of the $q_i$.
\begin{center}
\begin{pspicture}(-3,-2)(3,3)
\psline[linecolor=red](-0.6,-1.2)(3,0) 
\psline[linecolor=blue](-3,0)(0.6,-1.2) 
\psline[linecolor=red](-0.6,0.8)(3,2) 
\psline[linecolor=blue](-3,2)(0.6,0.8)
\psline[linecolor=blue](-2,2)(1.6,0.8)
\psline{->}(0,0.5)(0,-0.5)
\psline[linecolor=blue](2,1.8)(3,1.5)
\psline[linecolor=blue](1,1.5)(2,1.2)
\rput(-3.3,2){$S_1$}
\rput(-2.3,2){$S_2$}
\rput(3.5,2){$C$}
\rput(-3.5,0){${\PP}_1$}
\rput(3.5,0){${\PP}_2$}
\rput(0,-1.2){$s$}
\rput(2.6,1.1){$\ldots$}
\rput(2.1,1){$R_1$}
\rput(3.4,1.3){$R_{k-3}$}
\end{pspicture}
\end{center}

\item{ii)} Each general member $\varphi:X\to P$ of $E_3^{\Lg}$ maps to a
curve $P={\PP}_1 \cup {\PP}_2$ with ${\PP}_1$ (resp.\ ${\PP}^2$)
carrying the $2$ (resp.\ $6k-2$) marked
points of $\Lambda$ (resp.\ $\Lambda^c$). The inverse image of
${\PP}_2$ is a smooth curve $C$ of genus $g$ mapping with degree $k+1$
to ${\PP}_2$, while the inverse image
of ${\PP}_1$ consists of $k-2$ smooth rational curves $R_1,\ldots,R_{k-2}$
mapping with degree $1$ to ${\PP}_1$ and one smooth rational curve
$S$ mapping with degree~$3$ to ${\PP}_1$.
The intersection point $q$ of $S$ with $C$ is a ramification point 
of degree $3$, while the intersections of $C$ with the $R_{\nu}$
are not ramification points on $C$.

\begin{center}
\begin{pspicture}(-3,-2)(3,3)
\psline[linecolor=red](-0.6,-1.2)(3,0) 
\psline[linecolor=blue](-3,0)(0.6,-1.2) 
\psline[linecolor=red](-0.6,0.8)(3,2) 
\psline[linecolor=blue](-3,2)(0.6,0.8)
\psline{->}(0,0.5)(0,-0.5)
\psline[linecolor=blue](2,1.8)(3,1.5)
\psline[linecolor=blue](1,1.5)(2,1.2)
\rput(-2.3,2){$S$}
\rput(3.5,2){$C$}
\rput(-3.5,0){${\PP}_1$}
\rput(3.5,0){${\PP}_2$}
\rput(0,1.3){$q$}
\rput(0,-1.2){$s$}
\rput(2.6,1.1){$\ldots$}
\rput(2.1,1){$R_1$}
\rput(3.4,1.3){$R_{k-2}$}
\end{pspicture}
\end{center}
\end{proposition}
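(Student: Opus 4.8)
The plan is to classify the boundary divisors of $\bH_{g,d}$ by the data of the admissible cover over the two components of the degenerate base $P={\PP}_1\cup{\PP}_2$, and to isolate those whose generic stabilized source curve is smooth. Every boundary divisor of $\bH_{g,d}$ lies over a boundary divisor $S^{\Lambda}$ of $\bM_{0,6k}$, hence is determined by the partition $\Lambda\sqcup\Lambda^c$ together with the monodromy type of the cover over $P$. If its image in $\bM_g$ meets $M_g$, then after contracting the unstable rational components the source must be a smooth curve $C$ of genus $2k$. Since a degree $k+1$ genus $2k$ cover of ${\PP}^1$ requires $6k$ simple branch points, and ${\PP}_1$ carries only $|\Lambda|$ of them, the full genus is forced onto the component $C$ lying over ${\PP}_2$, while every component over ${\PP}_1$ is rational and meets $C$ in a single point; a rational component meeting $C$ in two points would, upon contraction, glue two points of $C$ and land in the boundary, as for $E_0$. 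Riemann--Hurwitz for $C\to{\PP}_2$ gives total ramification $6k$, and with $6k-|\Lambda|$ simple branch points away from the node $s$ the ramification of $C$ over $s$ equals $|\Lambda|$. For the resulting locus to be a divisor in $M_g$ this ramification must be a single condition on the pencil, i.e.\ of total order $2$, which forces $|\Lambda|=2$; a larger $|\Lambda|$ either splits the genus and lands in some $\Delta_j$ (Proposition \ref{description}) or imposes codimension at least $2$.

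With $|\Lambda|=2$ fixed, I would read off the possibilities from the monodromy. Let $\tau_1,\tau_2$ be the simple-branch transpositions over the two marked points of ${\PP}_1$; the local monodromy over the node is $\sigma_s=(\tau_1\tau_2)^{-1}$, and a product of two transpositions in $S_{k+1}$ has one of three cycle types: the identity, a $3$-cycle, or a product of two disjoint transpositions. The identity case is precisely $E_0$: the two sheets interchanged by $\tau_1=\tau_2$ form a degree $2$ rational component meeting $C$ in two points, whose contraction yields a node, so the divisor maps to $\Delta_0$; this is already treated in Proposition \ref{description} and is discarded here. The other two cycle types produce the two asserted divisors.

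Next I would translate each surviving type into geometry and check the numerics. A $3$-cycle gives a single degree $3$ rational component $S$, totally ramified over $s$ (so $S\cap C=\{q\}$ has ramification index $3$) and simply branched over the two marked points, together with $k-2$ sections $R_{\nu}$; this is $E_3^{\Lambda}$, and contracting $S$ and the $R_{\nu}$ leaves $C$ carrying a triple ramification point in the fibre over $s$, a point of $D_3$. A pair of disjoint transpositions gives two degree $2$ rational components $S_1,S_2$, each totally ramified over $s$ and simply branched over one of the two marked points, so that each meets $C$ in one ramification point $q_i$, together with $k-3$ sections; this is $E_2^{\Lambda,\alpha}$, and contraction leaves $C$ with two simple ramification points in the fibre over $s$, a point of $D_2$. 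In both cases an Euler-characteristic computation on the normalization confirms that the total cover $X$ has arithmetic genus $2k$ and that all the rational components are tails, so the stabilization is the smooth curve $C\in M_g$.

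Finally I would count. The two colliding branch points determine a subset $\Lambda$ ranging over the $\binom{6k}{2}$ two-element subsets of $\{1,\dots,6k\}$. For $E_3$ there is a unique ramification point over $s$, giving $\binom{6k}{2}$ pieces $E_3^{\Lambda}$. For $E_2$ there are two ramification points $q_1,q_2$, and the datum $\alpha$ records their ordering, equivalently which degree $2$ component is branched over which marked point; this contributes a factor $2$ and hence $2\binom{6k}{2}$ pieces $E_2^{\Lambda,\alpha}$. The main obstacle is the codimension bookkeeping of the first paragraph: one must justify, using the genericity of admissible covers, that for a general member the branch points away from $s$ remain simple, so that the entire ramification defect is concentrated over $s$, and that a ramification of total order $2$ at a single point is exactly a codimension one condition on the pencil. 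This is what pins down $|\Lambda|=2$ and rules out every other boundary type, leaving only $E_2$ and $E_3$.
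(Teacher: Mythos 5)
Your enumeration via the cycle type of $\tau_1\tau_2$ (identity, $3$-cycle, or type $(2,2)$, giving $E_0$, $E_3$, $E_2$) is attractive and ends with the same list as the paper, but there is a genuine gap at the start on which everything afterwards silently relies: you never rule out admissible covers in which the preimage of $\PP_2$ consists of the genus-$2k$ curve $C$ \emph{together with additional rational components}, i.e.\ covers with $\deg(C\to\PP_2)\le k$. Your sentence ``the full genus is forced onto the component $C$ lying over $\PP_2$'' only locates the positive-genus component; it does not force the degree of $C\to\PP_2$ to be $k+1$. Yet your Riemann--Hurwitz step (total ramification $6k$, hence ramification $|\Lambda|$ over $s$), the resulting conclusion $|\Lambda|=2$, and the monodromy analysis (which assumes the whole degree-$(k+1)$ cover of $\PP_1$ is glued to $C$ alone) all presuppose exactly this. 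Such configurations are not fictitious: for $g=8$, $d=5$, take $C\to\PP_2$ of degree $4$ with $22$ simple branch points, unramified over $s$, a line $R'$ over $\PP_2$, and over $\PP_1$ a degree-$2$ rational curve joining $C$ to $R'$ (branched over the two points of $\Lambda$) plus three lines meeting $C$. This sweeps out a boundary divisor of $\bH_{g,d}$ of dimension $3g-4$, its generic stabilization is the smooth curve $C$, so its image meets $M_g$; it passes every test in your proof (indeed your monodromy scheme would misclassify it as $E_0$, since $\tau_1\tau_2=\mathrm{id}$ here, even though the degree-$2$ curve meets $C$ in only one point), yet it is not on the asserted list.

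The reason such configurations can be discarded --- and this is the missing step --- is Brill--Noether theoretic: if $\deg(C\to\PP_2)=d'\le k$, then $C$ carries a $g^1_{d'}$ with $\rho=2d'-2k-2\le -2$, and by Eisenbud--Harris (\cite{EH2}, Thm.\ 1.1) such curves fill up a locus of codimension at least $2$ in $M_g$; hence the image of such a boundary divisor, although it meets $M_g$, is \emph{not a divisor} and is excluded by the statement being proved. This is precisely how the paper's proof begins, and some input of this kind is indispensable, because the competing configurations are eliminated only at the level of their image in $\bM_g$, not at the level of $\bH_{g,d}$. Once this is inserted, your dimension count and your cycle-type analysis (a nice, self-contained replacement for the paper's citation of Harris--Morrison for the case division) do complete the proof along essentially the paper's lines.
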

\begin{remark}
Later we shall prove that the divisors $E_2^{\Lambda, \alpha}$ and 
$E_3^{\Lambda}$
are irreducible divisors.
\end{remark}

\begin{proof}

If we want the image of an admissible cover to be a smooth curve of 
genus $g$ we must have over (say) $\PP_2$ a smooth curve of genus $g=2k$ 
and no rational components. Indeed, otherwise the restriction of the 
covering map on $C$ has degree $\leq k=d-1$. 
But then $\rho \leq -2$ and hence the image cannot be a divisor, 
see \cite{EH2}, Thm.\ 1.1. Over $\PP_1$ we then have only 
rational curves. A naive dimension count shows that the number of 
branch points on $\PP_2$  outside the singular point should be $b-2$ 
and hence on $\PP_1$ there should be $2$. In fact, in this case the total 
number of branch points on $\PP_2$ is $b-1$ and hence the number 
of parameters for the curve $C$ is $b-1-3=6k-4=3g-4$ as required
(and this is the only case where this happens). 
Then, over $\PP_1$  only two cases are possible, namely the ones 
described  in the statement of the proposition, see also \cite{HMo}, 
p.\ 181-83 and Figures 3.146 on p.\ 177, and 3.154 on p.\ 183 
(the first case corresponds to the situation where two branch points 
come together but the two ramification points remain distinct points 
on the same fiber and the second to the case 
where the two ramification points come together too). Note that
in the first case each of the $S_i$ contain one marked point
not mapping to $s$. This gives the marking $\alpha$.
\end{proof}
\begin{remark}\label{hurwitznumber}
For later use we notice that the Hurwitz number of degree $3$ covers
of ${\PP}^1$ of genus $0$ with one triple ramification point and 
two simple branch points is~$1$. The involution on ${\PP}^1$ fixing
the triple branch point and interchanging the other two branch points
lifts to the cover. Similarly, the Hurwitz number of degree $2$ covers
of genus $0$ with two branch points is $1$ and the involution
interchanging the two branch points and fixing a third point lifts
to the cover.
\end{remark}
\begin{lemma}\label{deformations} 
The formal local ring that pro-represents the infinitesimal deformations
of a general point of $E_3^{\Lambda}$ is smooth, but for a general
point of $E_2^{\Lg,\alpha}$ it equals
$$
{\CC}[[t_{11},t_{12},t_2,\ldots,t_{b-3}]]/\langle t_{11}^2-t_{12}^2\rangle
\, .
$$
The corresponding point on the coarse moduli space 
$\overline{H}_{g,d}$ is smooth.
\end{lemma}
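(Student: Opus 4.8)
The plan is to read off the deformation ring from the local theory of admissible covers developed in Harris--Mumford \cite{HMu}, \S 3. Recall the local picture: the versal deformation of an admissible cover $\varphi:X\to P$ is the product of an unobstructed (smooth) factor, accounting for those deformations that do not smooth any node of $X$ — the moduli of the smooth components together with the motions of the branch points — with one factor for each node of $X$ lying over a node of $P$. At such a node, where the two branches both carry ramification index $e$ (admissibility forces the indices to agree), the standard local model $u=x^e$, $v=y^e$ with $xy=t$ and $uv=\sigma$ gives $\sigma = uv = (xy)^e = t^e$, where $\sigma$ smooths the node of $P$ and $t$ smooths the node of $X$. Since the unique node $s$ of $P$ can only be smoothed by simultaneously smoothing all nodes of $X$ above it, the factors over $s$ are glued along the common parameter $\sigma$: if the nodes above $s$ have indices $e_1,\ldots,e_r$, the corresponding piece of the versal ring is obtained by adjoining $t_1,\ldots,t_r$ subject to $t_1^{e_1}=\cdots=t_r^{e_r}\,(=\sigma)$.

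First I would treat $E_3^{\Lambda}$. By Proposition \ref{description2}(ii) and Remark \ref{hurwitznumber}, over $s$ the curve $X$ has one node $q$ of index $3$ (where $S$ meets $C$) and $k-2$ nodes $p_\nu$ of index $1$ (where the $R_\nu$ meet $C$); indeed $3+(k-2)=k+1$ exhausts the fibre of $C\to\PP_2$ over $s$. The index-$1$ nodes force $t_{p_\nu}=\sigma$ and contribute nothing, while the single ramified node gives $\sigma=t_q^3$. As there is only one ramified node over $s$, we may use $t_q$ in place of $\sigma$ as a coordinate; the versal ring is then a power series ring, hence smooth. Moreover a general such cover has no nontrivial automorphism over $\mathrm{id}_P$ preserving the markings: the involution of Remark \ref{hurwitznumber} interchanges the two simple branch points of $S$ on $\PP_1$, which are the ordered marked points of $\Lambda$, and so does not preserve the marking, while the degree-$3$ cover $S\to\PP_1$ is not Galois (it carries simple branch points, forbidden for a cyclic cover) and hence has no deck transformations. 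Thus $\overline{H}_{g,d}$ agrees locally with the smooth versal deformation.

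For $E_2^{\Lambda,\alpha}$ the same bookkeeping, using Proposition \ref{description2}(i) and Remark \ref{hurwitznumber}, yields over $s$ two nodes $q_1,q_2$ of index $2$ (where $S_1,S_2$ meet $C$) and $k-3$ nodes of index $1$, with $2+2+(k-3)=k+1$. Writing $t_{11},t_{12}$ for the smoothings of $q_1,q_2$ and $t_2,\ldots,t_{b-3}$ for the remaining smooth coordinates (the deformations tangent to $E_2$, of which there are $b-4$), the two index-$2$ nodes impose $t_{11}^2=\sigma=t_{12}^2$, so the versal ring is $\CC[[t_{11},t_{12},t_2,\ldots,t_{b-3}]]/\langle t_{11}^2-t_{12}^2\rangle$, of the asserted dimension $b-3=3g-3$.

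It remains to show that the coarse point is nonetheless smooth, which I expect to be the crux. By Remark \ref{hurwitznumber} each cover $S_i\to\PP_1$ carries its deck involution $\iota_i$, fixing the ramification point $q_i$ and the marked point on $S_i$; extending $\iota_i$ by the identity on $C$ and the other components gives an automorphism of $\varphi$ over $\mathrm{id}_P$ preserving the markings. In the local model $\iota_i$ acts by $x\mapsto -x$ on the $S_i$-branch and fixes the $C$-branch, hence acts on the versal ring by $t_{1i}\mapsto -t_{1i}$ and fixes the other coordinates; since $S_1,S_2$ cannot be interchanged without permuting the ordered branch points, $G:=\langle\iota_1,\iota_2\rangle\cong(\ZZ/2)^2$ is the full automorphism group, and the coarse space is étale-locally the quotient by $G$. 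Setting $a=t_{11}+t_{12}$, $b=t_{11}-t_{12}$ turns the relation into $ab=0$, so the versal space is the union of the two smooth branches $\{a=0\}$ and $\{b=0\}$, and each of $\iota_1,\iota_2$ interchanges them. A direct computation of invariants on $\CC[t_{11},t_{12}]/\langle t_{11}^2-t_{12}^2\rangle$ gives the ring $\CC[t_{11}^2,t_{12}^2]$ modulo $t_{11}^2=t_{12}^2$, that is the polynomial ring on the single generator $t_{11}^2$; hence the quotient is a smooth line and the coarse space is smooth. The only genuinely delicate step is this quotient computation — that folding the two branches of the node $t_{11}^2=t_{12}^2$ by the deck involutions resolves it — together with the careful verification that $\iota_i$ acts by $t_{1i}\mapsto -t_{1i}$; everything else is the standard Harris--Mumford local analysis combined with the ramification data already recorded in Proposition \ref{description2} and Remark \ref{hurwitznumber}.
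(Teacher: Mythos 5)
Your proposal is correct and follows essentially the same route as the paper: the Harris--Mumford local model giving the relations $t_{1i}^{e_i}=\sigma$ over the node $s$ of $P$ (hence a smooth ring for $E_3^{\Lambda}$ and the ring $\CC[[t_{11},t_{12},t_2,\ldots,t_{b-3}]]/\langle t_{11}^2-t_{12}^2\rangle$ for $E_2^{\Lambda,\alpha}$), followed by passing to invariants under the deck involutions acting by $t_{1i}\mapsto -t_{1i}$, whose invariant ring $\CC[[u,t_2,\ldots,t_{b-3}]]$ is smooth. The additional details you supply (the explicit node count over $s$, the verification that the automorphism group is exactly $(\ZZ/2)^2$, and the discussion of the $E_3$ coarse point) are consistent with, and slightly more complete than, the paper's brief argument.
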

\begin{proof} The result for $E_3^{\Lambda}$ follows from \cite{HMu}, p.\ 62.
For each general point of $E_2^{\Lg, \alpha}$  
the complete local ring pro-representing the infinitesimal deformations
equals
$$
{\CC}[[t_1,\ldots,t_{b-3},t_{11},t_{12}]]/
\langle t^2_{11}-t_1, \, t^2_{12}-t_1\rangle \cong 
{\CC}[[t_{11}, t_{12},t_2,\ldots,t_{b-3} ]]/
\langle t^2_{11}-t^2_{12}\rangle \; .
$$
Indeed, for a cover $C \to P$ locally over $s={\PP}_1 \cap {\PP}_2$ 
the equations near $q_i$ at the two rational tails $S_i$
are $x_iy_i=t_{1i}$ for $i=1,2$. The involutions on these tails are
given by $y_i\mapsto -y_i$ and this induces
 $t_{11} \mapsto -t_{11}$ and $t_{12} \mapsto -t_{12}$. 
Hence the quotient is the smooth ring
$ {\CC}[[u, t_2,\ldots,t_{b-3} ]] $
and it is the complete local ring of the coarse moduli space 
$\overline{H}_{g,d}$ at a general point of $E_2^{\Lambda,\alpha}$.
\end{proof}
\end{section}
\begin{section}{The Trace Curve of a Pencil}\label{trace_curves}
In this section we collect a few results about trace curves
that we need in the sequel. 
Let $C$ be smooth curve of genus $g$ together with a pencil (a 
linear system of projective dimension $1$) of degree $d$, say $\gamma$. 
We define the {\sl trace curve} of $\gamma$ by
$$
T_{\gamma} =\{ (p,q) \in C \times C \, : \gamma \geq p+q \}.
$$
Here by $\gamma \geq p+q$ we mean that there is an effective divisor in 
$\gamma$ containing $p$ and $q$.
The following lemma gives information on the singularities that $T_{\gamma}$
might have. In the following we shall assume that our pencils are base point
free.
\begin{lemma}
If $\gamma$ is base-point free then $T_{\gamma}$ is smooth except for
possible singularities at points $(p,q)$ with both $p$ and $q$ ramification
points of $\gamma$. A point $(p,p)$  with $p$ a ramification point  
of order $m$ (of the map to ${\PP}^1$)
gives an ordinary singularity of order $m-1$ on $T_{\gamma}$.
Morover, if $(p,q) \in T_{\gamma}$
with $p\neq q$ and $p$ and $q$ simple ramification points
then the singularity of $T_{\gamma}$ at $(p,q)$ is a simple node.
\end{lemma}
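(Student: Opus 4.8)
The plan is to identify the trace curve with the residual part of a fibre product and then reduce everything to a single local computation in suitable coordinates. Since $\gamma$ is base-point free it defines a degree $d$ morphism $f\colon C \to \PP^1$ whose fibres are exactly the effective divisors of $\gamma$. For $p\neq q$ the condition $\gamma \geq p+q$ therefore says precisely that $f(p)=f(q)$, while for $p=q$ it says that $p$ is a ramification point of $f$. Consequently $T_{\gamma}$ is the closure in $C\times C$ of the off-diagonal locus $\{(p,q): p\neq q,\ f(p)=f(q)\}$; that is, it is the residual curve to the diagonal in the fibre product $C\times_{\PP^1}C$. Thus I would first record the set-theoretic identity
$$
C\times_{\PP^1}C = \Delta \cup T_{\gamma},
$$
where $\Delta\subset C\times C$ denotes the diagonal, and then analyse $T_{\gamma}$ locally.

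First I would set up local coordinates. Fix $(p,q)\in T_{\gamma}$ and put $\lambda_0=f(p)=f(q)$. Choosing a local coordinate $t$ on $\PP^1$ vanishing at $\lambda_0$, the local normal form for a nonconstant map of smooth curves lets me pick local coordinates $x$ at $p$ and $y$ at $q$ with $t\circ f = x^{a}$ near $p$ and $t\circ f = y^{b}$ near $q$, where $a$ and $b$ are the ramification indices of $f$ at $p$ and $q$. With these choices the fibre product $C\times_{\PP^1}C$ has local equation
$$
x^{a}-y^{b}=0
$$
in the bidisc with coordinates $(x,y)$. This is the computation on which the whole lemma rests.

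Now I would run the case analysis. If $p\neq q$ the diagonal does not pass through $(p,q)$, so locally $T_{\gamma}$ equals the fibre product and is cut out by $x^{a}-y^{b}=0$; the Jacobian criterion shows this is smooth as soon as $a=1$ or $b=1$, i.e.\ unless both $p$ and $q$ are ramification points, giving the first assertion. When $a=b=2$ the equation factors as $x^2-y^2=(x-y)(x+y)$, two smooth branches with distinct tangents, hence a simple node, giving the third assertion. If $p=q$ with ramification index $m$, then $a=b=m$ and the fibre product has equation $x^{m}-y^{m}=\prod_{\zeta^{m}=1}(x-\zeta y)=0$, a union of $m$ lines through the origin with pairwise distinct tangent directions. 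The factor $x-y$ is the diagonal, so the residual curve $T_{\gamma}$ is cut out near $(p,p)$ by $\prod_{\zeta^{m}=1,\ \zeta\neq 1}(x-\zeta y)=0$, a union of $m-1$ lines with distinct tangents, i.e.\ an ordinary singularity of order $m-1$. This is the second assertion (and, gracefully, the case $m=2$ yields a smooth point).

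The main thing to be careful about is the bookkeeping that separates the diagonal from the trace curve, since on the diagonal the two genuinely interact: one must check that $\Delta$ occurs with multiplicity one in $C\times_{\PP^1}C$ (so that dividing by $x-y$ is legitimate) and that the remaining $m-1$ tangent directions are distinct, which is exactly what makes the singularity ordinary. A secondary point is to invoke the local normal form correctly, so that $t\circ f=x^{a}$ holds on the nose rather than only to leading order; this is what lets me factor the local equation cleanly, and it is where I would be most careful to justify the coordinate change.
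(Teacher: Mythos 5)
Your proof is correct, and while it is conceptually close to the paper's argument --- both amount to dividing the diagonal out of the incidence relation $f(z_1)=f(z_2)$ and reading off the local equation --- the execution is genuinely different. The paper picks a basis $\{f,g\}$ of the pencil, writes $T_{\gamma}$ locally as the zero locus of $h(z_1,z_2)=\bigl(f(z_1)g(z_2)-f(z_2)g(z_1)\bigr)/(z_1-z_2)$, and extracts the singularity type from low-order Taylor coefficients: at a diagonal point the leading form is $\frac{z_1^m-z_2^m}{z_1-z_2}\,f_1(0)g(0)$, giving $m-1$ transverse branches, and at an off-diagonal point singularity is shown to force both linear coefficients to vanish, the node coming from the quadratic term $a_2d_0z^2-c_2b_0w^2$. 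You instead realize $T_{\gamma}$ as the residual curve to $\Delta$ in $C\times_{\PP^1}C$ and put the covering in normal form $t\circ f=x^a$, so the local equations become \emph{exactly} $x^a-y^b$ off the diagonal and $\prod_{\zeta^m=1,\,\zeta\neq 1}(x-\zeta y)$ on it. Your route buys exact local analytic models (unions of smooth branches with distinct tangents), which makes the case analysis immediate and in fact proves more than the lemma asks (the local structure at off-diagonal points of arbitrary ramification indices $a,b$); the paper's route is more self-contained, since it works directly with the defining equation coming from the pencil and never needs the multiplicity-one statement for $\Delta$ nor the normal-form theorem. The two points you flag as delicate are indeed the ones to nail down, and your factorization settles them, with one small addition you should make explicit: at a diagonal point $(p,p)$ you must take $y$ to be the \emph{same} normal-form coordinate as $x$ on the second factor (normal-form coordinates are only unique up to an $m$-th root of unity), so that $\Delta$ is literally $\{x=y\}$ and the residual factor is $\prod_{\zeta\neq 1}(x-\zeta y)$.
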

\begin{proof}
Let $\{ f, g\}$ be a basis of the pencil $\gamma$ and let $(p,p)$
be a point of $T_{\gamma}$ and let $z$ be a local coordinate at $p$.
Then $T_{\gamma}$ is locally at $(p,p)$ given by $h=0$ with
$$
h(z_1,z_2)= \frac{f(z_1)g(z_2)-f(z_2)g(z_1)}{z_1-z_2} \, .
$$
We may assume that ${\rm ord}_p(f)=m>0$ and ${\rm ord}_p(g)=0$. 
Write $f=z^m f_1$ and find in the local ring
$$
h(z_1,z_2)= \frac{z_1^{m}-z_2^{m}}{z_1-z_2} f_1(0)g(0),
$$
so locally at $(p,p)$ the curve $T_{\gamma}$ consists of $m-1$ 
branches passing transversally through $(p,p)$.

If $(p,q)\in T_{\gamma}$ with $p\neq q$ and $z$ (resp.\ $w$) a
local coordinate at $p$ (resp.\ $q$) we write $f=f_1(z)$ and $f=f_2(w)$
and similarly $g=g_1(z)$ and $g=g_2(w)$ in the local rings of
$p$ and $q$. The equation of $T_{\gamma}$ is then
$h(z,w)= f_1(z)g_2(w)-f_2(w)g_1(z)=0$.
Write $f_1= a_0 +a_1 z+ \ldots$ and $g_1=b_0+b_1 z+ \ldots$;
furthermore $f_2=c_0+c_1 w+ \ldots $ and $g_2=d_0+d_1w+\ldots$
and find that a singularity at $(p,q)$ 
means that besides $a_0d_0- b_0c_0=0$ we
have $a_1d_0-b_1c_0=0$ and $a_0d_1-b_0c_1=0$. We may assume that
$a_0=0$ and $b_0\neq 0$, hence $c_0=0$ and $d_0\neq 0$, so that
a singularity means $a_1=c_1=0$, i.e.\ both $p$ and $q$ are ramification
points. Then the next term in $h$ is $a_2d_0z^2-c_2b_0w^2$
and this shows that if $a_2$ and $c_2$ do not vanish we have a simple
node.
\end{proof}
\begin{lemma}\label{trace_curve_irr}
Let $\gamma$ be a base point free $g^1_d$ with all branch points simple
except one with arbitrary ramification. Then $T_{\gamma}$ is irreducible.
\end{lemma}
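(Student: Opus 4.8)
The plan is to prove irreducibility by studying the natural projection $\mathrm{pr}_1 \colon T_\gamma \to C$ onto the first factor and showing that the resulting branched cover is connected with a monodromy action that is transitive on the relevant fibers. First I would observe that away from the base locus (which is empty, since $\gamma$ is base-point free) the fiber of $\mathrm{pr}_1$ over a point $p \in C$ consists of the points $q$ such that $\gamma \geq p+q$: having fixed $p$, the unique divisor of $\gamma$ through $p$ is $\varphi^{-1}(\varphi(p))$, where $\varphi \colon C \to \PP^1$ is the degree $d$ map defined by $\gamma$, and the residual points $q$ are the remaining $d-1$ points of that fiber (counted with multiplicity). Thus $\mathrm{pr}_1$ exhibits $T_\gamma$ as a degree $d-1$ cover of $C$, and irreducibility of $T_\gamma$ is equivalent to connectedness of this cover, i.e.\ to transitivity of its monodromy group acting on a generic fiber of $d-1$ points.

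Next I would identify that monodromy group with the monodromy of the map $\varphi$ itself. Fix a general base point $\varphi(p)=t_0 \in \PP^1$, so the fiber $\varphi^{-1}(t_0)=\{p=x_1,x_2,\ldots,x_d\}$. As $p$ varies over $C$, the base point $t_0$ traces out $\PP^1$ and the monodromy of $\varphi$ (as a branched cover of $\PP^1$) permutes the sheets $\{x_1,\ldots,x_d\}$; the fiber of $\mathrm{pr}_1$ over the sheet labelled $x_1$ is exactly $\{x_2,\ldots,x_d\}$, the complementary $d-1$ sheets. The key point is that the monodromy group $G \subseteq S_d$ of $\varphi$ acts on pairs, and the stabilizer of the first coordinate acts on the remaining $d-1$ labels; connectedness of $T_\gamma$ amounts to the statement that the point stabilizer $G_1 \subseteq S_{d-1}$ acts transitively on $\{2,\ldots,d\}$, which is precisely the condition that $G$ be \emph{$2$-transitive} on $\{1,\ldots,d\}$. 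So the entire problem reduces to proving that $\varphi$ has $2$-transitive monodromy.

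To establish $2$-transitivity I would use the ramification hypothesis. Since $\gamma$ is base-point free and simple at all but one branch point, the monodromy group $G$ is generated by the local monodromies around the branch points: one transposition for each simple branch point, together with one element of cycle type determined by the single point of arbitrary ramification. A subgroup of $S_d$ that is transitive and generated by transpositions is automatically the full symmetric group $S_d$ (a classical fact, via the connected-graph criterion on the transpositions), hence I first need transitivity, which follows from irreducibility/connectedness of $C$ itself. The presence of at least one genuine transposition among the generators, once transitivity is known, forces $G=S_d$ provided enough transpositions are present; more carefully, even with the one exceptional branch point, the simple branch points supply transpositions whose associated graph on $\{1,\ldots,d\}$ is connected (again by transitivity of $C$), so $G$ contains $S_d$ or at least a $2$-transitive group. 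Since $S_d$ is $2$-transitive for $d\geq 2$, we conclude $G$ acts $2$-transitively, the cover $\mathrm{pr}_1$ is connected, and $T_\gamma$ is irreducible.

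The main obstacle I anticipate is the bookkeeping around the single non-simple branch point and the possibility that $T_\gamma$ fails to be reduced or picks up the diagonal as a spurious component. Concretely, one must check that the diagonal $\{(p,p)\}$ does not split off (it does not, because $\gamma\geq 2p$ only at the finitely many ramification points, so the diagonal is not contained in $T_\gamma$ as a component), and one must verify that the generic fiber of $\mathrm{pr}_1$ genuinely has $d-1$ \emph{distinct} reduced points so that the monodromy argument applies cleanly — this uses that for general $p$ the residual divisor $\varphi^{-1}(\varphi(p))-p$ is reduced, which holds because the branch locus is a proper closed subset. With these reductions in place, the $2$-transitivity of the full symmetric monodromy does the rest, and the arbitrary ramification at the one special point is harmless since it only enlarges, never obstructs, the monodromy group.
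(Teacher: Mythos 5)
Your structural reduction is sound, and it is in fact the same as the paper's: irreducibility of $T_\gamma$ is equivalent to double transitivity of the monodromy group $G\subseteq {\mathbb S}_d$ of $\varphi\colon C\to\PP^1$. (You phrase this via the degree $d-1$ cover $\mathrm{pr}_1\colon T_\gamma\to C$ and the point stabilizer $G_1$ acting on the residual sheets; the paper composes $\mathrm{pr}_1$ with $\varphi$ and asks for transitivity on ordered pairs of sheets — these are the same condition.) The genuine gap is in your proof of double transitivity. You need the subgroup generated by the transpositions attached to the \emph{simple} branch points to be transitive (equivalently, the graph of these transpositions on $\{1,\ldots,d\}$ to be connected), and you justify this ``by transitivity of $C$''. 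That is circular: transitivity of the full monodromy group $G$ is witnessed by all the local monodromies, \emph{including} the one at the exceptional branch point, and it does not by itself imply transitivity of the subgroup generated by the transpositions alone. Your fallback claim — that once $G$ is transitive, the presence of transpositions forces $G={\mathbb S}_d$ ``or at least a $2$-transitive group'' — is false as a statement about abstract permutation groups: in ${\mathbb S}_4$ the dihedral group $\langle (12),(34),(13)(24)\rangle$ of order $8$ is transitive, contains transpositions, and is not doubly transitive (the stabilizer of $1$ is $\{e,(34)\}$); and ``several transpositions plus one extra permutation'' is exactly the shape of generating set you have at this point.

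The missing idea — the one-line point the paper makes — is the global relation among the local monodromies: a loop in $\PP^1$ encircling all the branch points is nullhomotopic, so the product of all the local monodromies is the identity. Hence the local monodromy at the exceptional point is itself a product of the transpositions at the simple branch points, and therefore those transpositions alone generate $G$. Now the classical fact you quoted applies correctly: $G$ is transitive (since $C$ is connected) and generated by transpositions, hence $G={\mathbb S}_d$, which is doubly transitive for $d\geq 2$, and the rest of your argument (including your correct remarks about the diagonal and reducedness of the generic fiber of $\mathrm{pr}_1$) goes through. Note that the relation also rules out the dihedral counterexample above, since $(12)(34)\cdot(13)(24)\neq e$; without invoking it, the step as you wrote it can fail.
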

\begin{proof}
Consider the map $T_{\gamma} \to {\PP}^1$ defined as the composition of
the first projection $T_{\gamma} \to C$ composed with the map
$C \to {\PP}^1$ defined by $\gamma$. All singular points and all
the ramification points of $T_{\gamma}\to {\PP}^1$ lie over the 
branch points of $C\to {\PP}^1$. So for both coverings we consider 
the same monodromy group ($\pi_1$ of the punctured line).
Since $T_{\gamma} \subset C\times C$ the monodromy action for 
$T_{\gamma}$ is induced by the monodromy action for $C$. 
By showing that the latter is doubly transitive the result will follow.
Since $C\to {\PP}^1$ is simply branched except possibly at one point
the monodromy is generated by the transpositions at the simple branch points
(since the product of the permutations of all branch points is $1$).
We thus see that this generates a transitive subgroup of ${\mathbb S}_{d}$,
hence it is the whole symmetric group and therefore doubly transitive.
\end{proof}

\begin{corollary}
\label{trace_curves_singularities}
The trace curves induced by the pencils on $C_1,C_2$ as in i) of
Proposition \ref{description} and on $C$ as in ii) of  
Proposition \ref{description2} have one singular point which is 
an ordinary singularity and lies on the diagonal.
The trace curve induced by the pencil on  $C_2$  as in ii) of  
Proposition \ref{description} is smooth.
The trace curve induced by the pencil on  $C$ as in i) of  
Proposition \ref{description2} has two nodal singularities 
at two symmetric points. Moreover all the above trace curves 
are irreducible.
\end{corollary}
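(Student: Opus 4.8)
The plan is to treat each of the four trace curves in turn, in each case identifying the ramification data of the relevant pencil and then reading off the singularity structure directly from the first Lemma of this section (the one describing singularities of $T_\gamma$), together with Lemma~\ref{trace_curve_irr} for irreducibility. The common thread is that all the pencils in question are base-point free $g^1_d$'s that are simply branched except possibly at one distinguished point, so Lemma~\ref{trace_curve_irr} applies essentially verbatim to give irreducibility in every case; I would dispose of that claim first and uniformly, and then concentrate on the local singularity analysis case by case.

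First I would handle the pencils on $C_1, C_2$ from part~i) of Proposition~\ref{description} and the pencil on $C$ from part~ii) of Proposition~\ref{description2}. In each of these the pencil has all branch points simple except for a single point $p$ (resp.\ $q$) of higher ramification order $m$: from Proposition~\ref{description} the point $p$ has ramification degree $j+1-2c$ on both $C_1$ and $C_2$, and from Proposition~\ref{description2}~ii) the point $q$ has ramification degree $3$ on $C$. Since there is exactly one ramification point that is not simple, the only candidate for a singular point of $T_\gamma$ off the diagonal would require two ramification points in a common fibre with $p \neq q$, which cannot occur here because every fibre other than the special one is simply branched. Hence by the first Lemma of this section the only singularity is $(p,p)$ (resp.\ $(q,q)$) on the diagonal, and it is an ordinary singularity of order $m-1$. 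This gives exactly the assertion.

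Next I would treat the pencil on $C_2$ in part~ii) of Proposition~\ref{description}. Here the relevant cover $C_2 \to \PP^1$ is the degree $k+1$ map, and the key input from the proof of Proposition~\ref{degree}~ii) is that $C_2 \to \PP_2$ is \emph{unramified} at the two points $p, q$ lying over $s$; in fact all its branch points are simple and none of them sit in a fibre containing two ramification points. By the first Lemma, smoothness of $T_\gamma$ fails only at pairs of ramification points in the same fibre, and since every fibre of this simply-branched pencil contains at most one ramification point, no such pair exists. Therefore $T_\gamma$ is smooth. The one point requiring care is to confirm that there really is no fibre with two simple ramification points — but this is precisely the statement that $C_2$ defines a generic point of the Hurwitz space and so carries a genuinely \emph{simple} cover with $6k$ distinct simple branch points.

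Finally, for the pencil on $C$ in part~i) of Proposition~\ref{description2}, the geometry is the one governing the divisor $D_2$ itself: the admissible cover has two rational tails $S_1, S_2$ of degree $2$ meeting $C$ at ramification points $q_1, q_2$ lying in the \emph{same} fibre over $s$, and these are two distinct simple ramification points. This is exactly the off-diagonal situation of the first Lemma: $(q_1,q_2)$ and its mirror $(q_2,q_1)$ both lie on $T_\gamma$ with $q_1 \neq q_2$, both are simple ramification points, and the Lemma then gives a simple node at each. All other fibres being simply branched, there are no further singularities, so $T_\gamma$ has precisely two nodes at the symmetric pair of points, as claimed. The main obstacle I anticipate is not in any single local computation — those are immediate from the first Lemma — but in marshalling the ramification data correctly in each case from the two descriptive Propositions and in verifying the genericity needed to exclude accidental extra coincidences of ramification in a common fibre; establishing that the chosen pencils are the generic ones, so that exactly the predicted ramification pattern occurs, is where the argument must be pinned down.
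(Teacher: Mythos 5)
Your proposal is correct and follows exactly the route the paper intends: the corollary is stated there without a separate proof precisely because it is the immediate combination of the local singularity lemma and Lemma~\ref{trace_curve_irr} applied to the ramification data of Propositions~\ref{description} and~\ref{description2}, which is what you carry out case by case. The only quibble is a harmless numerical slip: the cover $C_2\to\PP_2$ in Proposition~\ref{description}~ii) has $6k-2$ (not $6k$) simple branch points, which does not affect the argument.
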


\end{section}
\begin{section}{An Irreducibility Result}
In this section we shall prove that the boundary divisors $E_{j,c}^{\Lg}$
defined in Proposition \ref{description} 
and the boundary divisors $E_2^{\Lambda,\alpha}$, $E_3^{\Lg}$ 
defined in Proposition \ref{description2} are irreducible. 
We start with the result for $E_2$ and $E_3$.

Let $\underline{c}$ be a conjugacy class of the symmetric group 
${\mathbb S}_d$ on $d$ objects. It is given by a partition  of $d$. 
We consider the Hurwitz space ${\mathcal H}_{d, b, \underline{c}}$ 
parametrizing isomorphism classes of (connected) Riemann surfaces 
that are degree $d$ covers of ${\PP} ^1$ that are simply branched at $b$ 
(unordered) points of the projective line different from 
infinity and have ramification type $\underline{c}$ over infinity. 
This has the structure of a smooth analytic space; this may be proved
as in \cite{F}.

We define $\Pi_b:=(\CC ^1)^{\times b}-\Delta$ with $\Delta$ the big diagonal
and $\Sigma_b:={\rm Sym}^b \CC - D$, with $D$ the discriminant locus
and then have a natural map  $p: \Pi_b \to \Sigma_b$.

There is a natural covering map $\mu : {\mathcal H}_{d, b, \underline{c}} 
\to \Sigma_b$ by assigning
 to each point of  ${\mathcal H}_{d, b, \underline{c}} $ the set of
$b$ points with simple branching. We get a projection map
$$
{\rm pr}_2: 
{\mathcal H}_{d, b, \underline{c}} \times _
{\Sigma _b} \Pi_b  \to  \Pi_b. 
$$ 

Let now $H_{d, b, \underline{c}}$ be the Hurwitz space
parametrizing isomorphism classes of (connected) Riemann surfaces 
that are degree $d$ covers of $\PP ^1$ simply branched at $b$ 
{\em ordered} points of the projective line and have an extra point 
with ramification of type $\underline{c}$, modulo the equivalence 
relation that two such covers $f_i: C_i \to \PP ^1$ are equivalent 
if there exist isomorphisms $h:C_1\to C_2$ and $\gamma :\PP^1 \to \PP^1$ 
with $f_2 \circ h=\gamma \circ f_1$. 

There is a natural surjective map 
$$
m: {\mathcal H}_{d, b, \underline{c}} \times _{\Sigma _b} \Pi_b  
\to H_{d, b, \underline{c}}, \eqno(*)
$$
given by associating to the cover $f:C \to {\PP}^1$ and a set of ordered branch
points $\{a_1,\ldots,a_b\}$ the cover with its ordered branch points.

\begin{theorem}
With the notations as before and ${\underline{c}}$ the conjugacy class of 
$\phi=(12)(34)$ or of $\phi=(123)$ the Hurwitz space 
$H_{d, b, \underline{c}}$ is irreducible.
\end{theorem}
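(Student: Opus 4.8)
The plan is to translate the statement into a count of braid orbits via the Riemann existence theorem and then feed it into the classical connectedness theorem of Clebsch and Hurwitz. Since the Hurwitz spaces in question are smooth (built as in \cite{F}), connectedness is equivalent to irreducibility, so it suffices to show that a single braid orbit accounts for all the monodromy data. Concretely, I would fix a base configuration of $b$ points in $\CC$ together with the distinguished point $\infty$ carrying the ramification $\underline{c}$, and identify the fibre of $H_{d,b,\underline{c}}$ over it with the set of Hurwitz systems
$$
\mathcal N=\{(\sigma_1,\ldots,\sigma_b,\tau):\ \sigma_i\ \text{transpositions},\ \tau\in\underline{c},\ \sigma_1\cdots\sigma_b\,\tau=1,\ \langle\sigma_1,\ldots,\sigma_b,\tau\rangle\ \text{transitive}\}/\text{conj}.
$$
Because a transitive subgroup of ${\mathbb S}_d$ generated by transpositions is all of ${\mathbb S}_d$, and because Riemann--Hurwitz shows that $b$, $d$ and $\underline{c}$ already fix the genus, the class $\underline{c}$ together with $b$ and $d$ are the only discrete invariants present; the components of $H_{d,b,\underline{c}}$ are precisely the orbits of the relevant (pure) braid group acting on $\mathcal N$ by Hurwitz moves $Q_i$.

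The engine is the classical theorem of Clebsch and Hurwitz (compare \cite{HMu}) that the Hurwitz space of simple covers of $\PP^1$ with ordered branch points is connected, i.e.\ that the braid group acts transitively on tuples of transpositions with trivial product generating ${\mathbb S}_d$. The bridge to our situation is the elementary observation that \emph{both} of our classes are products of exactly two transpositions: $(12)(34)$ is a product of two disjoint transpositions and $(123)=(13)(12)$. I would use this to relate $\mathcal N$ to the monodromy data of simple covers with $b+2$ branch points: choosing transpositions $\rho_1,\rho_2$ with $\rho_1\rho_2=\tau^{-1}$ turns each $(\sigma_1,\ldots,\sigma_b;\tau)$ in $\mathcal N$ into a tuple $(\sigma_1,\ldots,\sigma_b,\rho_1,\rho_2)$ of $b+2$ transpositions with trivial product still generating ${\mathbb S}_d$, and conversely any simple system whose last two entries multiply into the class $\underline{c}$ arises this way. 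By Clebsch these extended systems form a single braid orbit.

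The crux — and the step I expect to be the main obstacle — is the \emph{descent} from the single orbit of $b+2$-entry systems to a single orbit on $\mathcal N$. The forgetful map $\Phi:(\sigma_1,\ldots,\sigma_b,\rho_1,\rho_2)\mapsto(\sigma_1,\ldots,\sigma_b;\tau)$ is surjective and equivariant only for the subgroup $\tilde B$ of braids preserving the block $\{b+1,b+2\}$ of the two colliding branch points; under $\Phi$ this subgroup maps onto exactly the Hurwitz moves on $(\sigma_1,\ldots,\sigma_b)$ together with the simultaneous conjugations of $\tau$ that govern the components of $H_{d,b,\underline{c}}$. The difficulty is that Clebsch supplies transitivity under the full braid group, whereas I need transitivity under the proper subgroup $\tilde B$ on the locus $\{\rho_1\rho_2\in\underline{c}\}$; equivalently I must show that collapsing the two branch points into one does not split the orbit. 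I would resolve this by the standard normal-form technique: using only the moves $Q_i$ on the first $b$ strands one brings $(\sigma_1,\ldots,\sigma_b)$ to a canonical shape — a spanning ``tree'' of transpositions realizing $\tau^{-1}$ followed by cancelling pairs $(\rho,\rho)$ — so that the pair $(\rho_1,\rho_2)$ can be pushed to a fixed slot and never unblocked, whence the two canonical systems coincide. An alternative, more geometric route is to realize $H_{d,b,\underline{c}}$ as dominated by the boundary divisor of the compactified simple Hurwitz space $\overline{H}_{d,b+2}$ along which two branch points come together with combined monodromy in $\underline{c}$, and to deduce connectedness from the irreducibility of the interior of $\overline{H}_{d,b+2}$ together with the connectedness of the collision degeneration.

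Finally I would settle two bookkeeping points. First, the ordering of the $b$ branch points and the quotient by the automorphisms of $\PP^1$ fixing $\infty$: since $\infty$ is fixed, the residual group is the connected affine group, whose quotient does not disconnect anything, while the half-twist generators inside $\tilde B$ realize the relabellings, so the ordered and unordered orbit counts agree once the product is fixed. Second, the range of validity: the single-orbit conclusion for transposition systems needs $b$ large enough that there are ``free'' transpositions beyond those realizing $\tau$ and generating ${\mathbb S}_d$, and in the geometric application $d=k+1$ and $b=6k-2$ comfortably meet this bound, so no degenerate low-$b$ cases intervene.
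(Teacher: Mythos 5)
Your reduction to braid combinatorics is set up correctly, but the proposal has a genuine gap exactly where the paper does its real work: the passage from the full braid group to the pure braid group. The theorem concerns the Hurwitz space with \emph{ordered} branch points, so its irreducibility is equivalent to transitivity of the pure braid group $P_b=\pi_1(\Pi_b)$ on the monodromy data. Every transitivity statement your argument produces --- Clebsch's theorem for the extended $(b+2)$-tuples, and transitivity of the block-preserving subgroup $\tilde B$ after descent --- concerns groups whose image in the group of relabellings of the $b$ simple branch points is all of ${\mathbb S}_b$; at best this yields irreducibility of the \emph{unordered} space ${\mathcal H}_{d,b,\underline{c}}$. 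You dispose of the remaining step in one clause (``the ordered and unordered orbit counts agree once the product is fixed''), but this is precisely the nontrivial point: a single $B_b$-orbit can a priori split into several $P_b$-orbits, permuted transitively by $B_b/P_b\cong{\mathbb S}_b$, and nothing in your argument rules this out. To rule it out one must exhibit, for each transposition $(i,i+1)$ of labels, a braid inducing it that fixes some reference tuple (equivalently, whose action preserves a given $P_b$-orbit). That is the paper's key device: after invoking Kluitman's theorem (full braid transitivity on tuples of transpositions with prescribed product $\phi$), it constructs a special tuple $\sigma_0$ in which consecutive transpositions are equal or have overlapping supports, checks that the cubes $\Gamma_i^3$ of the standard braid generators fix $\sigma_0$ entrywise while still inducing the transpositions $(a_i\,a_{i+1})$ of the labels, and then corrects an arbitrary braid carrying a given tuple to $\sigma_0$ by a word in the $\Gamma_i^3$ so that the corrected braid is pure. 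Your proposal contains no analogue of this step, and note that an arbitrary canonical form would not suffice: it is the overlapping-support property of $\sigma_0$ that makes the $\Gamma_i^3$ act trivially.

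A secondary remark: your descent from Clebsch (product $1$, $b+2$ transpositions) to tuples with product in $\underline{c}$, via a canonical shape reached by moves on the first $b$ strands only, is in substance a re-derivation of the Kluitman result that the paper simply cites; that part is standard and fillable, though it is a real argument rather than a formality. The gap above, however, is structural: a route that only ever uses subgroups surjecting onto ${\mathbb S}_b$ cannot distinguish $P_b$-orbits from $B_b$-orbits, and that distinction is the actual content of the theorem for ordered branch points.
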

\begin{corollary}
The divisors $E_2^{\Lambda,\alpha}$ and $E_3^{\Lg}$ are irreducible.
\end{corollary}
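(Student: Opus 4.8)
The plan is to reduce the theorem to the classical irreducibility of the Hurwitz space of \emph{simple} covers of $\PP^1$, through the monodromy description of covers. First I would pass from geometry to group theory. Fixing a base point of $\PP^1$ and standard loops around the $b$ branch points and the special point, a cover in $H_{d,b,\underline{c}}$ is recorded by a tuple $(t_1,\dots,t_b)$ of transpositions in ${\mathbb S}_d$ whose product $\rho=(t_1\cdots t_b)^{-1}$ lies in the class $\underline c$ and which generates a transitive subgroup (connectedness of the cover). Since a transitive subgroup of ${\mathbb S}_d$ generated by transpositions is all of ${\mathbb S}_d$, this is the same as generating ${\mathbb S}_d$. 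As $H_{d,b,\underline c}$ is smooth, irreducibility is connectedness; and by the surjectivity of the map $m$ in $(*)$, connectedness of $H_{d,b,\underline c}$ follows from connectedness of the fibre product ${\mathcal H}_{d,b,\underline c}\times_{\Sigma_b}\Pi_b$, which in turn amounts to the transitivity of the (pure) braid monodromy on the set $\mathcal T$ of such tuples, taken up to simultaneous ${\mathbb S}_d$-conjugation. So everything reduces to this transitivity statement.

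Next I would degenerate the special point into two simple branch points. Both distinguished elements of $\underline c$ are products of two transpositions, so after normalizing $\rho$ by conjugation I can factor $\rho=t_{b+1}t_{b+2}$ with $t_{b+1},t_{b+2}$ transpositions. Then $(t_1,\dots,t_b,t_{b+1},t_{b+2})$ is a tuple of $b+2$ transpositions generating ${\mathbb S}_d$ with product the identity, i.e.\ the monodromy of a connected simple cover of $\PP^1$ with $b+2$ ordered branch points. By the classical Clebsch--Hurwitz theorem, in the form established by Fulton \cite{F}, the Hurwitz space of such simple covers is irreducible; equivalently, the pure braid monodromy acts transitively, up to ${\mathbb S}_d$-conjugation, on all such length-$(b+2)$ tuples.

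The heart of the matter, and the step I expect to be the main obstacle, is to descend this transitivity to $\mathcal T$. Given two tuples in $\mathcal T$, after simultaneous conjugation I may take both to have the same product $\rho$ and append the same factorization $\rho=t_{b+1}t_{b+2}$; the enlarged tuples are then equivalent under the length-$(b+2)$ monodromy by the previous step. The difficulty is that the connecting sequence of Hurwitz moves need not keep the appended pair $(t_{b+1},t_{b+2})$ in its last two slots, so it does not restrict to moves of the first $b$ entries only, and hence does not obviously descend to an equivalence in $\mathcal T$. I would remove this by proving a normal form for $\mathcal T$ directly: using Hurwitz moves among the first entries one brings the tuple to one containing a fixed spanning chain $(12),(23),\dots,(d-1,d)$ witnessing the generation of ${\mathbb S}_d$, after which the remaining transpositions can be cancelled in pairs, so that the only surviving invariants are the number of branch points and the conjugacy class of $\rho$. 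For the two classes in question the final bookkeeping is short, since a $3$-cycle, respectively a product of two disjoint transpositions, has an essentially unique factorization into two transpositions up to the braid move on those two strands and up to conjugation by the centralizer of $\rho$ --- the same rigidity recorded in Remark \ref{hurwitznumber}. This identifies $\mathcal T$ as a single orbit and proves the theorem.

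Finally, the corollary follows by identifying the interiors of the boundary divisors with these Hurwitz spaces. A general member of $E_2^{\Lambda,\alpha}$ (respectively $E_3^{\Lambda}$) is determined by the degree $k+1$ cover $C\to\PP_2$ together with its $6k-2$ ordered simple branch points and a point over $s$ of ramification type $(12)(34)$ (respectively $(123)$), plus the cover over $\PP_1$, which is rigid by Remark \ref{hurwitznumber}; in the first case the marking $\alpha$ distinguishes the two ramification points. Hence, up to this rigid and finite data, the interiors of $E_2^{\Lambda,\alpha}$ and $E_3^{\Lambda}$ are the Hurwitz spaces $H_{k+1,\,6k-2,\,\underline c}$ of the theorem, and are therefore irreducible.
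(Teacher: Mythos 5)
Your opening reduction (smoothness of the fibre product plus surjectivity of the map $m$ in $(*)$ reduces everything to transitivity of the pure braid group $P_b$ on tuples of transpositions with product in $\underline{c}$, taken up to simultaneous conjugation) and your final paragraph deducing the corollary from irreducibility of $H_{d,b,\underline{c}}$ both match the paper. The gap is in the middle: the pure-braid transitivity is never actually proved. The degeneration route (append a factorization of $\rho$ into two transpositions and invoke the simple-cover case) fails for exactly the reason you state --- a pure braid in $P_{b+2}$ connecting the enlarged tuples can entangle the two appended strands with the first $b$, so it need not restrict to an element of $P_b$ --- and your proposed repair abandons it in favor of a ``direct normal form.'' But that normal-form sketch is precisely where the theorem lives, and as written it proves the wrong statement: bringing a tuple to a standard shape by unrestricted Hurwitz moves and ``cancelling in pairs'' classifies orbits of the \emph{full} braid group $B_b$ (for a nontrivial product class this is Kluitman's theorem, \cite{K}, Theorem 1, which the paper cites rather than re-derives), whereas you need orbits of the \emph{pure} braid group $P_b$. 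Since $B_b/P_b\cong{\mathbb S}_b$ permutes the ordered branch points, full-braid transitivity only yields irreducibility of the unordered Hurwitz space; that is strictly weaker and does not give irreducibility of $E_3^{\Lg}$ or $E_2^{\Lg,\alpha}$, which sit inside $\bH_{g,d}$, a space with \emph{ordered} branch points.

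What is missing is a mechanism to correct the permutation of the marked points: one must show that the $B_b$-stabilizer of the normal form surjects onto ${\mathbb S}_b=B_b/P_b$. This is where the paper's specific choice of $\sigma_0$ does real work: it is chosen so that any two consecutive transpositions either coincide or share a letter of their supports, and then each cube $\Gamma_i^3$ of a standard braid generator fixes $\sigma_0$ (via $[(mn),(nl)]\to[(nl),(ml)]\to[(ml),(mn)]\to[(mn),(nl)]$) while still interchanging the branch points $a_i$ and $a_{i+1}$. Hence any $\Gamma\in B_b$ carrying a given tuple to $\sigma_0$ can be composed with a word in the $\Gamma_i^3$ to produce an element of $P_b$ doing the same job, which is the required transitivity. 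Your proposal contains no analogue of this step; the ``essentially unique factorization'' of $\rho$ into two transpositions that you invoke (the rigidity of Remark \ref{hurwitznumber}) addresses only the well-definedness of the stabilization map, not this ordering problem, so the argument does not go through as it stands.
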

We first deduce the corollary from the theorem.
With $\underline{c}$ the type of a $3$-cycle, say $(123)$, 
and with $b=6k-2$  we have a natural inclusion 
$\nu: H_{d, b, \underline{c}}  \to E_3^{\Lg}$ given as
follows. A point of $H_{d, b, \underline{c}}$ corresponds to a 
cover $C_2 \to \PP_2$ of the projective line  with an ordering 
of the $6k-2$ branch points (which we assume to be indexed by the set $\Lg^c$). 
Then $\nu $ sends this point to the point of $E^{\Lg}_3$
corresponding to the admissible cover $X\to P=\PP_1 \cup \PP_2$, 
with  $\PP_1$ containing the marked points $p_i$ with $i \in \Lg$ and $X$ 
the curve with $C_2$ over $\PP_2$, while over $\PP_1$ 
we have a union of rational curves attached at the ramification points 
of $C_2$ over infinity with the appropriate ramification conditions. 
Note that the positions of the two points $p_i$, $i\in \Lg$, on ${\PP}_1$ do not matter
because of the automorphism group of ${\PP}^1$, cf.\ also 
Remark \ref{hurwitznumber}.
This is a dominant map since its image contains 
the general member  of $E^{\Lg}_3$.  
Since $H_{d, b, \underline{c}}$ 
is irreducible we conclude that  $E_3^\Lg$ is irreducible. 
Similarly for $E_2^{\Lambda,\alpha}$, but here we have to take into account
a marking of the two ramification points of degree $2$ on $C_2$ 
lying over the same point.

\begin{proof}
We prove the theorem by showing that the monodromy of 
${\rm pr}_2$ acts transitively on the fibres
and this implies that
the fibre product
${\mathcal H}_{d, b, \underline{c}} \times _{\Sigma _b} \Pi_b$ is 
connected and by the smoothness it is then irreducible and therefore its 
image $H_{d, b, \underline{c}}$ is irreducible too.  

We choose a point $A \in \Sigma_b$ and a point of $\Pi_b$ mapping to $A$ 
under $p$. That is, we order the points of $A$, say $A=\{a_1,\ldots,a_b\}$.
The points of the fiber $\mu^{-1}(A)$ correspond to the 
${\mathbb S}_d$-conjugacy classes of $b$-tuples $[t_1,\ldots,t_b]$ with
$t_i$ a transposition in the symmetric group
${\mathbb S}_d$ such that these generate ${\mathbb S}_d$ and such that the product $t_1\cdots t_b$ has type ${\underline{c}}$.

By fixing a permutation $\phi$ from the conjugacy class ${\underline{c}}$
we can then describe the fibre $\mu^{-1}(A)$ as the quotient
$$
\Xi_{\phi}^{d,b}/G_{\phi} \, ,
$$
where $G_{\phi}\subset {\mathbb S}_d$ 
is the stabilizer of $\phi$ under conjugation and
$\Xi^{d,b}_{\phi}$ is the set
$$
\Xi ^{d,b}_{\phi} = \{  [t_1,\ldots ,t_b], \;  t_i \mbox{ 
are transpositions generating } {\mathbb S}_d, \;  t_1\cdots t_b=\phi\}
$$
on which $G_{\phi}$ acts by conjugation.

According to \cite{K}, Theorem 1, the braid group $B_b=\pi_1(\Sigma _b, A)$ 
acts transitively on $\Xi ^{d,b}_{\phi}$.  
We consider now the two cases, $\phi=(123)$ and $\phi=(12)(34)$ and we 
prove that in both cases the pure braid group $P_b=\pi_1(\Pi_b,\{a_1,\ldots,a_b\})$ acts transitively on $\Xi ^{d,b}_{\phi}$. 
Note that $B_b/P_b\cong {\mathbb S}_b$.

We work as in \cite{D}, proof of Lemma 3.2. 
We denote by $\Gamma _i$, $i=1,\ldots,b-1$, the standard generators of the 
braid group $B_d$. The action of $\Gamma_i$ on  $\Xi ^{d,b}_{\phi}$ sends
$[t_1,\ldots, t_i,t_{i+1},\ldots ,t_b]$  to 
$[t_1,\ldots, t_{i+1},t_{i+1}t_it_{i+1},\ldots ,t_b]$. 
Moreover, $\Gamma_i$ interchanges the points $a_i$ and $a_{i+1}$.  
We examine now separately the two cases:

{\sl Case i}: $\phi=(123)$. 
We start with the element  $[t_1,\ldots ,t_b]  \in \Xi ^{d,b}_{\phi}$. 
By the above transitivity result we can find an  element 
$\Gamma $ of $B_b$ which sends  $[t_1,\ldots ,t_b]$ to the following 
element of $\Xi ^{d,b}_{\phi}$:
$$ 
\sigma_0=[(13),(12), (14),(14),\ldots, (1d-1),(1d-1),(1d), \ldots,(1d)]\, ,
$$
where the last transposition $(1d)$ appears $b-2(d-3)$ times 
(which by the Hurwitz-Zeuthen formula is an even number).

We now consider the elements $\Gamma _i^3$, $i=1,\ldots,b-1$. 
The action by such an element interchanges $a_i$ and $a_{i+1}$ 
and so the above set of elements acts transitively on the permutation group 
${\mathbb S}_b$ of the indices. On the other hand we observe that it 
acts trivially on  $\sigma_0$, because the supports of two consecutive 
transpositions in $\sigma _0$ have a common part: 
if $[(mn),(kl)]$ denote the $i$th and $(i+1)$th element in $\sigma_0$, then  
if $(mn)=(kl)$ the action of $\Gamma _i$ is trivial, and if $n=k$ but 
$n \neq m\neq l$ then 
the action of $\Gamma_i^3$ is given by
$$ 
[(mn),(nl)]\to [(nl), (ml)] \to [(ml),(mn)] \to [(mn), (nl)]\,. 
$$
Because of the transitivity of the action of the set 
$\Gamma _i^3$, $i=1,\ldots,b-1$, on ${\mathbb S}_b$ 
we may compose $\Gamma$ with an appropriate sequence of the elements 
$\Gamma_i^3$ so that the composition  belongs to the pure braid group $P_b$ 
and the action still sends  our $b$-tuple $[t_1,\ldots ,t_b] $ 
to the fixed element $\sigma_0$. 
This proves that $P_b$ acts transitively on 
$\Xi ^{d,b}_{\phi}$, with $\phi=(123)$. 

{\sl Case ii)}: $\phi=(12)(34)$. 
We work as before with
$$
\sigma_0=
[(12), (13),(13),(34), (14),(14),\ldots, (1d-1),(1d-1),(1d), \ldots,(1d)]\, ,
$$
where the last transposition $(1d)$ appears $b-2(d-2)$ times 
(which is an even number).

This proves that in the two cases the 
product 
${\mathcal H}_{d, b, \underline{c}} \times _{\Sigma _b} \Pi_b$ is
connected. 
\end{proof}

\begin{proposition} Each divisor $E_{j,c}^{\Lg}$ as in 
Proposition \ref{description} is irreducible.
\end{proposition}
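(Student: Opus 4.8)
The plan is to imitate the proof just given for $E_2^{\Lg,\alpha}$ and $E_3^{\Lg}$, but now feeding into the construction \emph{two} Hurwitz spaces instead of one. By Proposition \ref{description} a general admissible cover in $E_{j,c}^{\Lg}$ is assembled from a cover $f_1\colon C_1\to \PP_1$ of degree $k+1-c$ and genus $2k-j$, carrying the $6k-3j$ ordered branch points indexed by $\Lg^c$ together with one point $s$ over which $p$ has ramification of order $m:=j+1-2c$, and a cover $f_2\colon C_2\to \PP_2$ of degree $j+1-c$ and genus $j$, carrying the $3j$ ordered branch points indexed by $\Lg$ together with one point $s$ over which $p$ again has order $m$. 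The admissible cover is recovered by identifying the two copies of $p$ into a node and gluing a rigid degree-one rational bridge onto each of the remaining (unramified) points of $C_1$ and of $C_2$ lying over $s$. Since these bridges carry no branch points and the node at $p$ is formed out of two points that are \emph{determined} by $f_1$ and $f_2$, this reconstruction carries no extra moduli and defines a morphism
\[
H_{k+1-c,\,6k-3j,\,\underline c}\times H_{j+1-c,\,3j,\,\underline c}\longrightarrow E_{j,c}^{\Lg},
\]
with $\underline c$ the conjugacy class of an $m$-cycle, whose image contains the general member of $E_{j,c}^{\Lg}$. A dimension count ($\dim = (6k-3j-2)+(3j-2)=6k-4$ on both sides) confirms that this map is dominant, so it suffices to prove that each factor is irreducible.

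For this I would run the monodromy argument of the Theorem proved above, the only change being that $\phi$ is now a single $m$-cycle rather than $(12)(34)$ or $(123)$. Kanev's Theorem~1 in \cite{K} still provides transitivity of the full braid group $B_{b'}$ on $\Xi^{d',b'}_{\phi}$, so the task is again to descend to the pure braid group $P_{b'}$. Here I would take the normal form
\[
\sigma_0=[\,(1\,m),(1,m-1),\ldots,(1,3),(1,2),(1,m+1),(1,m+1),\ldots,(1,d'),\ldots,(1,d')\,],
\]
in which, after the initial chain whose product is the cycle $(1\,2\,\cdots\,m)$, each further transposition occurs in a repeated pair and the last one $(1,d')$ is repeated until $b'$ entries are reached (an even number, by the Hurwitz--Zeuthen parity). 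These transpositions generate $\mathbb{S}_{d'}$ and multiply to $\phi$. As in the Theorem, the cubes $\Gamma_i^3$ of the standard braid generators project onto the adjacent transpositions of $\mathbb{S}_{b'}$, hence act transitively on the ordering; and each $\Gamma_i^3$ fixes $\sigma_0$, because every pair of consecutive entries is either equal or shares exactly the index $1$, which is precisely the pattern $[(mn),(nl)]\mapsto[(mn),(nl)]$ computed there. Composing Kanev's braid with a suitable product of the $\Gamma_i^3$ therefore yields an element of $P_{b'}$ sending any tuple to $\sigma_0$, which gives irreducibility of each factor. (When $m=1$, i.e.\ $j$ even and $c=j/2$, the point $p$ is unramified and the factor is a classical simple Hurwitz space, irreducible by Clebsch--Hurwitz.)

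Combining the two steps, the source of the displayed morphism is a product of irreducible varieties, hence irreducible, and $E_{j,c}^{\Lg}$ is the closure of its image, hence irreducible as well. The step I expect to be the main obstacle is the first one: one must check carefully that forming the node at $p$ and attaching all the rational tails really contributes no parameters and that \emph{every} general member of $E_{j,c}^{\Lg}$ is obtained in this way, including keeping track of the extra identification of $\Lg$ with $\Lg^c$ in the symmetric case $j=k$. Once the reconstruction is seen to be a dominant morphism from the product, the braid-theoretic input is a routine extension of the Theorem, guided by the explicit $\sigma_0$ above.
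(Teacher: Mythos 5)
Your proof is correct and is essentially the paper's own argument: the paper likewise reduces the statement to irreducibility of the two Hurwitz spaces $H_{k+1-c,\,6k-3j,\,\underline{c}}$ and $H_{j+1-c,\,3j,\,\underline{c}}$ (with $\underline{c}$ the class of a $(j+1-2c)$-cycle), proved by the same pure-braid-group descent using a normal form $\sigma_0$ whose consecutive transpositions share an index and the $\Gamma_i^3$ trick, and then maps their product dominantly onto $E_{j,c}^{\Lg}$ by the same gluing construction (node at $p_1\sim p_2$, rigid rational tails at the unramified points over $s$). The only differences are cosmetic: the paper pads its $\sigma_0$ with repeated $(12)$'s rather than repeated $(1,d')$'s, performs the two steps in the opposite order, and does not spell out the dimension count you include.
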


\begin{proof}
The irreducibility of the divisors $E^{\Lg}_{j,c}$ is proved in a way 
similar to the case $E_3$. With $d_1=k+1-c$, $b_1=6k-3j$ and 
$\phi= (1 \, 2\ldots j+1-2c)$ we define $\sigma_0=[t_1,\ldots,t_{6k-3j}]$ 
by taking $t_{\nu}=(1, j+2-2c-\nu)$ for $\nu=1,\ldots,j-2c$,
and $t_{j-2c+2\mu+1}=t_{j-2c+2\mu+2}=(1, j+2-2c+\mu)$ 
for $\mu=0,\ldots,k-j+c-1$
and the remaining $t_{\nu}$ are equal to $(1\, 2)$, i.e.\ 
$\sigma_0$ is equal
to
$$
[(1,j+1-2c), \ldots ,(12),\; (1,j+2-2c), (1,j+2-2c),\ldots, 
 (1 d_1), (1 d_1),\; (12),\ldots,(12)].       
$$
Note that $\sigma _0$ contains all the transpositions $(1k),\; k=1, \ldots 
d_1=k+1-c$, hence it generates the symmetric group ${\mathbb S}_{d_1}$. 
The last transposition $(12)$ appears $[6k-3j]-[(j-2c)+2(k-j+c)]$ 
times which is the even number $4k-2j$. 
Hence the product of the transpositions contained in $\sigma _0$  
is $\phi$. 
With $\underline{c}$ the type of $\phi$, an argument similar to 
the case $E_3$ shows that the corresponding Hurwitz space  
$H_{d_1,b_1,\underline{c}}$ with ordered branch points is irreducible. 

Similarly we show that  the Hurwitz space  $H_{d_2,b_2,\underline{c}}$ 
is irreducible with  $d_2=j+1-c$, $b_2=3j$,  and $\underline{c}$ 
the type of $\phi= (12\ldots j+1-2c)$,  by defining
$\sigma_0=[t_1,\ldots,t_{3j}]$ with $t_1,\ldots,t_{j-2c}$ as in the 
preceding paragraph and
$t_{j-2c+2\mu+1}=t_{j-2c+2\mu+2}=(1, j+2-2c+\mu)$ for $\mu=0,\ldots,c-1$
and by setting $t_{\mu}=(1\, 2)$ for the remaining indices.
The last  transposition $(12)$ appears an even number $3j-[(j-2c)+2c]=2j$  
of times.
Therefore the space 
$H_{d_1,b_1,\underline{c}}\times H_{d_2,b_2,\underline{c}}$ 
is irreducible. 

We now define the inclusion  
$\nu: H_{d_1,b_1,\underline{c}}\times 
H_{d_2,b_2,\underline{c}} \to E^{\Lg}_{j,c}$ as follows:
We assume that the  $b_1=6k-3j$ marked points of  
$H_{d_1,b_1,\underline{c}}$ take values in the set $\Lg^c$ and the 
$b_2=3j$  marked points of  $H_{d_2,b_2,\underline{c}}$ take 
values in the set $\Lg$. A point $h_1 \in H_{d_1,b_1,\underline{c}}$ 
(resp.\ $h_2 \in H_{d_2,b_2,\underline{c}}$) 
corresponds to a curve $C_1$ (resp.\ $C_2$) of  genus $2k-j$
(resp. $j$)  with a $g^1_{d_1}$ (resp.\ $g^1_{d_2}$) 
having simple branching except in one fiber which has a point 
$p_1$ (resp.\ $p_2$) of ramification degree $j+1-2c$ and simple 
ramification everywhere else. We then define
$\nu(h_1,h_2)$ to be the admissible cover $X$ 
constructed by the above data as in Proposition \ref{description} i) 
by joining the curves $C_1$ and $C_2$ at the points $p_1$ and $p_2$ 
respectively and attaching rational tails appropriately. 
The map $\nu$  is a dominant map since its image contains  
the general member  of $E^{\Lg}_{j,c}$.  
Since  $H_{d_1,b_1,\underline{c}}\times 
H_{d_2,b_2,\underline{c}}$ is irreducible we conclude 
that  $E^{\Lg}_{j,c}$ is irreducible.   
\end{proof}
  
For the divisor $E_0$ we have a decomposition $E_0=\sum _{\Lg} E_0^{\Lg}$ 
with $\Lg$ running over the subsets of $\{1, \ldots ,6k\}$ with $2$ elements. 
We prove that $E_0^{\Lg}$ is irreducible. 

\begin{proposition}
The divisor $E_0^{\Lg}$ as in Proposition \ref{description} ii) is irreducible.
\end{proposition}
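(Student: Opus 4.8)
The plan is to realize $E_0^{\Lg}$ as the image of an irreducible family built from a Hurwitz space of simply branched covers together with the trace curves of their pencils. By Proposition \ref{description} ii) a general member $\varphi:X\to P$ of $E_0^{\Lg}$ is determined by the following data: a smooth curve $C_2$ of genus $2k-1$ with a base point free pencil $\gamma$ of degree $k+1$ (the map $C_2\to\PP_2$) all of whose $6k-2$ branch points are simple, an ordering of those branch points by the set $\Lg^c$, and an unordered pair $\{p,q\}$ of distinct, non-ramification points lying in one fibre of $\gamma$. Once $(C_2,\gamma)$ and $\{p,q\}$ are given, the point $s=\gamma(p)=\gamma(q)$ and the remaining $k-1$ points of $\gamma^{-1}(s)$ are determined, and the rest of the admissible cover is forced: over $\PP_1$ we glue the unique degree $2$ cover $C_1$ (Remark \ref{hurwitznumber}) along $p,q$ and attach a rational tail $R_\mu$ at each of the other $k-1$ points. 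The point to exploit is that the pair $\{p,q\}$ is precisely a point of the trace curve $T_\gamma$ lying off the diagonal, well defined up to the involution $\iota(p,q)=(q,p)$.

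First I would let $\mathcal{H}$ be the Hurwitz space of degree $k+1$ genus $2k-1$ covers of $\PP^1$ with $6k-2$ ordered simple branch points (up to automorphisms of the target), of dimension $(6k-2)-3=6k-5$, and show it is irreducible. This is the analogue of the irreducibility theorem proved for $E_3$, now with $\underline{c}$ the trivial conjugacy class, i.e.\ with $\phi=\mathrm{id}$ and product relation $t_1\cdots t_{6k-2}=\mathrm{id}$. The braid group acts transitively on the set $\Xi^{d,b}_{\mathrm{id}}$ of tuples of transpositions generating $\mathbb{S}_{k+1}$ with trivial product (the classical Clebsch--Hurwitz case of \cite{K}), and choosing the base tuple $\sigma_0=[(12),(12),(13),(13),\ldots,(1,k+1),(1,k+1),(12),\ldots,(12)]$, in which any two consecutive transpositions share the index $1$ and each transposition occurs an even number of times, the elements $\Gamma_i^3$ fix $\sigma_0$ while acting as the full symmetric group on the positions; exactly as for $E_3$ this shows the pure braid group acts transitively, hence $\mathcal{H}$ is irreducible.

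Next I would form the relative trace curve $\mathcal{T}\to\mathcal{H}$ whose fibre over $(C_2,\gamma)$ is $T_\gamma$. By Corollary \ref{trace_curves_singularities} the trace curve of the pencil on $C_2$ as in ii) of Proposition \ref{description} is smooth and irreducible, so, $\mathcal{H}$ being irreducible with irreducible one-dimensional general fibre, the total space $\mathcal{T}$ is irreducible of dimension $(6k-5)+1=6k-4=\dim E_0^{\Lg}$. The natural map $\nu:\mathcal{T}\to E_0^{\Lg}$, sending $((C_2,\gamma),(p,q))$ to the admissible cover obtained by gluing $C_1$ along $p,q$ and attaching the $k-1$ rational tails at the remaining points of $\gamma^{-1}(s)$, is defined on the locus off the diagonal and ramification divisor and is dominant, since its image contains the general member of $E_0^{\Lg}$ described above. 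As the closure of the image of the irreducible space $\mathcal{T}$, having the same dimension as $E_0^{\Lg}$ and containing its general member, the divisor $E_0^{\Lg}$ is therefore irreducible.

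The main obstacle I anticipate is the second step, the \emph{ordered} irreducibility of $\mathcal{H}$: one must verify that the transitivity result of \cite{K} applies to the trivial class $\phi=\mathrm{id}$ and that a base tuple $\sigma_0$ with the overlapping-support property and the correct even multiplicities exists for $b=6k-2$, $d=k+1$; granting this, the reduction from the full braid group to the pure braid group is identical to the $E_3$ argument. A secondary point to handle carefully is the involution exchanging $p$ and $q$: the map $\nu$ factors through $T_\gamma/\iota$, so its only ambiguity is the interchange of the two points over $s$, matching the non-free $\mathbb{S}_{6k}$-action recorded in the proof of Proposition \ref{degree} ii); this affects the degree of $\nu$ but not the irreducibility of its image.
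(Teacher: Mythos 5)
Your proposal is correct and takes essentially the same route as the paper: both build the irreducible relative (universal) trace curve over the Hurwitz space $H_{k+1,6k-2}$ of genus $2k-1$, degree $k+1$ covers with $6k-2$ ordered simple branch points, use the smoothness/irreducibility of its fibres from the trace-curve section, and map it dominantly ($2:1$, via the $p \leftrightarrow q$ involution) onto $E_0^{\Lg}$ by gluing in $C_1$ and the rational tails. The only difference is that you explicitly carry out the pure-braid-group transitivity argument for $\phi=\mathrm{id}$ to prove irreducibility of the ordered base Hurwitz space, a classical fact the paper leaves implicit; this is a correct and harmless addition.
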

\begin{proof}
Consider the Hurwitz space $H_{k+1,6k-2}$ parametrizing isomorphism 
classes of (connected) Riemann surfaces of genus $2k-1$ that are 
degree $k+1$ covers of $\PP ^1$ simply branched at $6k-2$ ordered 
points of the projective line.  Let  ${\mathcal C}_H$ be the universal 
curve over $H_{k+1,6k-2}$. On ${\mathcal C}_H \times_{H_{k+1,6k-2}}  
{\mathcal C}_H$ we consider the universal trace curve $\Tcal$; 
the fiber  $\Tcal _h$ of $\Tcal $ over a point $h \in  H_{k+1,6k-2}$ 
is the trace curve $\{(x,y)\in C\times C: x+y\leq \gamma \}$ 
corresponding to $\gamma$, the $g^1_{k+1}$ associated to $h$. 
The curve $\Tcal_h$ is an irreducible curve because the $g^1_{k+1}$ 
has simple branching, see section  \ref{trace_curves}. 
Therefore $\Tcal  $ is an irreducible space. We now define a natural 
$2:1$ map $\nu : \Tcal \to E_0^{\Lg}$ as  follows. 
We let the  $6k-2$ branch points of $C$ take values in the set $\Lg^c$. 
A point $h$ of $\Tcal $ corresponds to a curve $C$ with a $g^1_{k+1}$ 
as above, say $\gamma$, 
and a couple $(p,q)$ of points of $C$ with $\gamma \geq p+q$. 
We then define $\nu (h)$ to be the admissible cover as in Proposition \ref{description} ii), with $C_2=C$  and the points $p,q$ as above. 
We attach to $C_2$ the rational curves $C_1$ and 
$R_1, \ldots , R_{k-1}$ as in Proposition \ref{description}. 
As in the case of $E_3$ the position of the branch points on $\PP_1$ 
does not matter. The map $\nu$  is a dominant map since its image 
contains  the general member  of $E_0^{\Lg}$.  Since  $\Tcal $  is 
irreducible we conclude that   $E_0^{\Lg}$ is irreducible.   
\end{proof}

\end{section}
\begin{section}{The Degree of $\pi$ restricted to $E_3$ and $E_2$}\label{sec:degree}
We shall denote the image of the divisor $E_3$ (resp.\  $E_2$) 
under the morphism $\pi: \overline{H}_{2k,k+1} \to \overline{M}_{2k}$ 
by $D_3$ (resp.\  $D_2$). We know that $E_3$  decomposes
as a union of ${6k \choose 2}$ irreducible divisors $E_3^{\Lg}$, 
 with  $\# \Lg=2$ and similarly 
$E_2=\sum_{\Lambda,\alpha} E_2^{\Lambda,\alpha}$
with $2 \, {6k \choose 2}$ components. 
It follows from the results of the preceding section
that the degree of $\pi: E_3^{\Lg} \to D_3$ (resp.\
$\pi: E_2^{\Lg,\alpha} \to D_2$) is the same as the
degree of a map $H_{k+1,6k-2,\underline{c}} \to D_3$
with $\underline{c}$ the type of a $3$-cycle (resp.\ of a cycle of
type $(12)(34)$). In fact, the Hurwitz space 
$H_{k+1,6k-2,\underline{c}}$
can be identified with the Hurwitz space $H_{k+1, 6k-2,3}$ 
(resp. $H_{k+1,6k-2,2+2}$), 
that parametrizes $k+1$ coverings $C \to D$ with $D$ a $6k-1$ 
pointed curve $(D,p_1,\ldots,p_{6k-1})$ of genus $0$ and $C$ 
a connected smooth curve of genus $2k$ which has over $p_1$ 
one point of triple ramification (resp.\ two simple
ramification points) and is simply branched at the points 
$p_2, \ldots, p_{6k-1}$ and unramified everywhere else.
We know that $H_{k+1, 6k-2,3}$ (resp.\ $H_{k+1,6k-2,2+2}$) is
irreducible and hence its compactification 
$\overline{H}_{k+1,6k-2,3}$ (resp.\ $\overline{H}_{k+1,6k-2,2+2}$)
by admissible covers 
(see \cite{D1}, Section 5)   is irreducible.  

\begin{theorem} \label{degreeE}
The degree of $\pi$ restricted to $E_3$ is 
$(6k)!/2$. The degree of $\pi$ restricted to $E_2$
is $(6k)!$.
\end{theorem}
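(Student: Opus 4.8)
The plan is to reduce each degree to the degree of a single Hurwitz space over $D_3$ (resp.\ $D_2$) and then to count in a general fibre. By Proposition~\ref{description2} we have $E_3=\sum_{\Lg}E_3^{\Lg}$ with ${6k\choose 2}$ irreducible components and $E_2=\sum_{\Lg,\alpha}E_2^{\Lg,\alpha}$ with $2{6k\choose 2}$ irreducible components, all dominating $D_3$ resp.\ $D_2$; hence $\deg\pi|_{E_3}={6k\choose 2}\,\deg\pi|_{E_3^{\Lg}}$ and $\deg\pi|_{E_2}=2{6k\choose 2}\,\deg\pi|_{E_2^{\Lg,\alpha}}$. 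By the identifications recorded just before the statement, $\deg\pi|_{E_3^{\Lg}}$ equals the degree of $\overline{H}_{k+1,6k-2,3}\to D_3$ and $\deg\pi|_{E_2^{\Lg,\alpha}}$ equals the degree of $\overline{H}_{k+1,6k-2,2+2}\to D_2$, the maps sending a cover to its source curve. The whole point is thus that the factor distinguishing $(6k)!/2$ from $(6k)!$ is carried by the number of components, i.e.\ by the extra marking $\alpha$ of the two ramification points in the $E_2$ case.

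To compute $\deg(\overline{H}_{k+1,6k-2,3}\to D_3)$ I would factor it through the space $\mathcal H_{k+1,6k-2,3}$ of such covers with \emph{unordered} simple branch points: forgetting the ordering of the $6k-2$ simple branch points is finite of degree $(6k-2)!$, since for a general source curve $C$ one has $\mathrm{Aut}(C)=1$ and the generic cover has full symmetric monodromy (as in the proof of Lemma~\ref{trace_curve_irr}), so no orderings get identified. Over a general $[C]\in D_3$ the residual map $\mathcal H_{k+1,6k-2,3}\to D_3$ has fibre the set of $g^1_{k+1}$'s on $C$ carrying a triple ramification point: the configuration over $\PP_2$ is exactly such a pencil, while the configuration over $\PP_1$ is rigid because the degree $3$ cover of Remark~\ref{hurwitznumber} is unique and its involution (fixing the node and swapping the two simple branch points) leaves the admissible cover invariant. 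Writing $n_3$ for the number of such pencils, this gives $\deg(\overline{H}_{k+1,6k-2,3}\to D_3)=(6k-2)!\,n_3$, and likewise $\deg(\overline{H}_{k+1,6k-2,2+2}\to D_2)=(6k-2)!\,n_2$, with $n_2$ the number of $g^1_{k+1}$'s on a general curve of $D_2$ having two ramification points in one fibre.

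The main obstacle is to prove $n_2=n_3=1$, i.e.\ that a general curve of $D_3$ (resp.\ $D_2$) carries a \emph{unique} special pencil. Here I would consider the degree $N$ cover $\mathcal W\to M_g$ parametrizing pairs $(C,\gamma)$ with $\gamma$ a $g^1_{k+1}$, together with the divisors $\mathcal W_3,\mathcal W_2\subset\mathcal W$ on which $\gamma$ acquires the relevant degeneration; these are irreducible by the results of the previous section. Letting $\delta_i\subset M_g$ be the divisor along which the $i$-th sheet of $\mathcal W$ degenerates, the relation $i\sim j\Leftrightarrow\delta_i=\delta_j$ is invariant under the monodromy $G\subset\mathbb S_N$ of $\mathcal W\to M_g$, so if $G$ is primitive the $\delta_i$ are either pairwise equal or pairwise distinct. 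The pairwise-equal case is excluded because a general curve of $D_3$ does not have all $N$ of its pencils degenerate at once (one checks by a local deformation that a single pencil can be made to acquire a triple ramification point while the others stay general), so the $\delta_i$ are distinct and a general point of the irreducible divisor $D_3$ lies on exactly one of them, giving $n_3=1$; the same argument gives $n_2=1$. The genuine input is the primitivity, indeed $2$-transitivity, of the monodromy $G$ of the $N$ pencils, which I would establish by a braid-group/uniform-position argument parallel to the irreducibility theorem proved above.

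Combining the steps yields $\deg\pi|_{E_3}={6k\choose 2}(6k-2)!=(6k)!/2$ and $\deg\pi|_{E_2}=2{6k\choose 2}(6k-2)!=(6k)!$, as asserted.
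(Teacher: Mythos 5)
Your reduction agrees with the paper's: both decompose $E_3$ and $E_2$ into ${6k\choose 2}$ resp.\ $2{6k\choose 2}$ irreducible components, identify the degree on one component with the degree of $\overline{H}_{k+1,6k-2,3}\to D_3$ (resp.\ $\overline{H}_{k+1,6k-2,2+2}\to D_2$), and reduce everything to showing that this degree is $(6k-2)!$, i.e.\ that a general curve of $D_3$ (resp.\ $D_2$) carries a \emph{unique} special pencil. The gap is in your proof of that uniqueness ($n_3=n_2=1$). To begin with, your objects $\delta_i$ (``the divisor along which the $i$-th sheet of $\mathcal{W}$ degenerates'') are not defined: $\mathcal{W}\to M_g$ is connected, so there are no global sheets, and any intrinsic assignment $i\mapsto\delta_i$ would have to be monodromy-constant. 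One can repair this by passing to the Galois closure $\widetilde{M}\to M_g$, where the $N$ pencils are tautologically ordered and $\widetilde{\delta}_i\subset\widetilde{M}$ (the locus where the $i$-th pencil is of type $(3)$) makes sense, with the Galois group permuting the $\widetilde{\delta}_i$. But then the argument still fails at the last step: primitivity gives ``all $\widetilde{\delta}_i$ equal or pairwise distinct,'' and pairwise distinctness does \emph{not} imply $n_3=1$, because the $\widetilde{\delta}_i$ need not be irreducible --- irreducibility of $\mathcal{W}_3$ does not imply irreducibility of its preimage in $\widetilde{M}$ --- so two distinct $\widetilde{\delta}_i,\widetilde{\delta}_j$ may share an irreducible component dominating $D_3$, which is precisely the statement $n_3\geq 2$.

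This is not a technicality: the principle you invoke is false. Consider the degree-$N$ cover given by the roots of a family of polynomials over a two-dimensional base with full symmetric monodromy, let $\mathcal{W}_3=\{(f,r):\ -r\ \text{is also a root of}\ f\}$ and let $D$ be its image. Then $\mathcal{W}_3$ is irreducible, the global monodromy is all of $\mathbb{S}_N$, it is not the case that all roots ``degenerate'' at once, and yet $\mathcal{W}_3\to D$ has degree $2$: in the Galois closure, $\widetilde{\delta}_i=\bigcup_{j\neq i}\{x_j=-x_i\}$, and these are pairwise distinct but share components. So $2$-transitivity plus irreducibility plus your deformation remark cannot force $n_3=1$; one needs genuinely geometric input to rule out that $D_3$ is ``caused'' symmetrically by a pair of pencils rather than by a single one. (In addition, the $2$-transitivity of the monodromy of the $N$ pencils is itself only asserted in your proposal, not proved.) The paper obtains the uniqueness by an entirely different, concrete route: it degenerates to a point $[C]=[C'/(p\sim q)]\in D_3\cap\Delta_0$, with $C'$ a general curve of genus $2k-1$ and $(p,q)$ chosen by the numerical Lemma \ref{abclemma} (resp.\ Lemma \ref{dlemma}); a dimension count plus an explicit case analysis of admissible covers shows the set-theoretic fibre over $[C]$ is a single point, and then a local multiplicity analysis in the style of Harris--Mumford (needed because $[C]$ is a special, not a general, point of $D_3$) converts this into the degree statement. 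Some such degeneration argument is what your proof is missing.
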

In view of the discussion above it suffices
to prove that the degree of the map
$\overline{H}_{k+1,6k-2,3} \to D_3$ 
(resp.\ $\overline{H}_{k+1,6k-2,2} \to D_2$)
equals $(6k-2)!$,
in other words that the degree is $1$ modulo the action of ${\mathbb S}_{6k-2}$.
Since $\overline{H}_{k+1,6k-2,3}$ is irreducible
it suffices to find an
appropriate smooth point of $D_3$ and determine the degree of 
the fiber over this point.

For this we consider linear systems $g^1_{k+1}$ on a generic curve of
genus $2k-1$ with $6k-4$ simple branch points and one branch point
over which there is one triple ramification point (resp.\ two double
ramification points). We call such a pencil of degree $k+1$ of type $(3)$ 
(resp.\ of type $(2,2)$).

Recall that according to Harris (\cite{H}, Thm 2.1) 
for a general curve $C^{\prime}$ of genus $2k-1$
the number of pencils of degree $k+1$ and of type $(3)$ is finite and equals
$$
b(k)=12 \frac{k-1}{k} {2k \choose k+1} .
$$
Similarly, by the same result (cf.\ loc.\ cit.) for a general $C^{\prime}$
of genus $2k-1$ 
and a general point $p$ on $C^{\prime}$ there are finitely many pencils 
$\gamma$ of degree $k+1$ on $C^{\prime}$ 
with the property that $\gamma \geq 2p$. 
Their number equals
$$
a(k)= \frac{1}{k} {2k \choose k+1} .
$$
Moreover, for a general $C^{\prime}$ of genus $2k-1$ and a general point $p$
the number of pairs $(\gamma,q)$ with $\gamma$ a pencil of degree $k+1$
and $\gamma \geq p+2q$ is finite and equals
$$
c(k)=5 \frac{k-1}{k} {2k \choose k+1}.
$$

\begin{lemma}\label{abclemma}
Let $C'$ be a general curve of genus $2k-1$ and let $p$ be a general point
of $C^{\prime}$. Then there exists a point $q$ on $C^{\prime}$ such that
\begin{enumerate}
\item{} there exists a unique pencil 
$\gamma$ on $C^{\prime}$ of degree $k+1$ 
and type $(3)$ with $\gamma \geq p+q$;
\item{} there does not exist a pencil $\gamma^{\prime}$ on $C^{\prime}$ 
of degree $k+1$ with $\gamma^{\prime}\geq 2p+q$ 
or with $\gamma^{\prime}\geq 
p+2q$.
\end{enumerate}
\end{lemma}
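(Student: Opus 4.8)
The plan is to exploit the three enumerative constants $a(k)$, $b(k)$, $c(k)$ recalled just above, together with the fact that the trace curves of the type $(3)$ pencils are irreducible. Since $C'$ is general there are exactly $b(k)$ pencils $\gamma_1,\ldots,\gamma_{b(k)}$ of degree $k+1$ and type $(3)$, and each is base-point-free with all branch points simple except the triple one; hence by Lemma \ref{trace_curve_irr} each trace curve $T_{\gamma_i}\subset C'\times C'$ is irreducible. Distinct pencils give distinct trace curves, since the fibres, and hence the pencil, are recovered from $T_{\gamma}$. Therefore the pairwise intersections $T_{\gamma_i}\cap T_{\gamma_j}$ with $i\neq j$ are finite, and so is their union, which I call $Z$.

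First I would take $p$ to be general, which is permitted since both $C'$ and $p$ are assumed general. Concretely I require that $p$ lie outside the finite set $\mathrm{pr}_1(Z)$, that the fibre of each $\gamma_i$ through $p$ be reduced with $k+1$ distinct points none of which is a ramification point, and that the counts $a(k)$ and $c(k)$ be attained as honest cardinalities at $p$; this last is exactly Harris's finiteness statement for general $p$. Now let $Q_p$ be the set of points $q\neq p$ lying in some fibre through $p$ of some $\gamma_i$. Each $\gamma_i$ contributes the $k$ residual points of its fibre $p+q^{(i)}_1+\cdots+q^{(i)}_k$. If a point $q$ occurred as a residual point for two different $\gamma_i,\gamma_j$, then $(p,q)\in T_{\gamma_i}\cap T_{\gamma_j}\subset Z$, contradicting $p\notin\mathrm{pr}_1(Z)$; so all the $q^{(i)}_\nu$ are pairwise distinct. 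Consequently $|Q_p|=k\,b(k)$, and for every $q\in Q_p$ there is exactly one type $(3)$ pencil $\gamma$ with $\gamma\geq p+q$. This establishes condition $(1)$ for every $q\in Q_p$.

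It then remains to pin down some $q\in Q_p$ satisfying $(2)$. Set $B^{(1)}_p=\{q:\exists\,\gamma'\text{ of degree }k+1,\ \gamma'\geq 2p+q\}$ and $B^{(2)}_p=\{q:\exists\,\gamma',\ \gamma'\geq p+2q\}$. For general $p$ there are exactly $a(k)$ pencils $\gamma'$ with $\gamma'\geq 2p$, and each has $p$ as a double point of its fibre, contributing the $k-1$ residual points, so $|B^{(1)}_p|\leq (k-1)\,a(k)$. Likewise the pairs $(\gamma',q)$ with $\gamma'\geq p+2q$ are exactly $c(k)$ in number, whence $|B^{(2)}_p|\leq c(k)$. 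Using the recalled values,
$$
(k-1)\,a(k)+c(k)=6\,\frac{k-1}{k}\binom{2k}{k+1}\;<\;12(k-1)\binom{2k}{k+1}=k\,b(k)=|Q_p|,
$$
the right side being larger by a factor $2k$. Hence $Q_p\setminus\bigl(B^{(1)}_p\cup B^{(2)}_p\bigr)$ is non-empty, and any $q$ in it satisfies both $(1)$ and $(2)$.

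The main obstacle is not the final inequality, which leaves a comfortable factor of $2k$, but the genericity bookkeeping of the second paragraph: one must verify that a single general $p$ can be chosen so that every trace curve $T_{\gamma_i}$ meets $\{p\}\times C'$ in a reduced fibre with no residual point shared between two type $(3)$ pencils, and that Harris's numbers $a(k)$ and $c(k)$ are realized as exact finite cardinalities rather than virtual counts. Each of these is the complement of a proper closed condition on $p$ for $C'$ general, so their conjunction is again a general-position requirement; confirming that each of these loci is genuinely proper is where the care lies, but it uses only the finiteness already quoted from Harris and the irreducibility supplied by Lemma \ref{trace_curve_irr}.
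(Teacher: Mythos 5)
Your proposal is correct and follows essentially the same route as the paper's own proof: the same constants $a(k)$, $b(k)$, $c(k)$ from Harris, the same use of Lemma \ref{trace_curve_irr} to control the union of trace curves of the type $(3)$ pencils, and the same counting inequality $k\,b(k) > (k-1)a(k)+c(k)$ to produce the desired $q$ (your $Q_p$, $B^{(1)}_p$, $B^{(2)}_p$ are the paper's $\Sigma_p$, $\Sigma_p^{\prime}$, $\Sigma_p^{\prime\prime}$). The only cosmetic difference is that you fix a general $p$ first and then count residual points, whereas the paper chooses a general pair $(p,q)$ on $T_3$ directly; the genericity conditions imposed are equivalent.
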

\begin{proof}
Let $\gamma_i$ with $i=1,\ldots,b(k)$ be the type $(3)$ pencils of degree $k+1$
and let $T_3=\cup_{i=1}^{b(k)} T_{\gamma_i}$ be the union of the trace curves
associated to the $\gamma_i$. Note that by Lemma
\ref{trace_curve_irr} each $T_{\gamma_i}$ is irreducible
and contained in $C'\times C'$ and thus $T_3$ has a projection 
$\tau_3 : T_3 \to C'$ to the first factor. 
We now choose a pair $(p,q)$ in $T_3$ which is
sufficiently general; this means that $p$  is not contained
in the image under $\tau_3$ of any multiple point of $T_3$
and $p$ is not contained in a fibre of a $\gamma_i$ containing
a ramification point of $\gamma_i$; in other words $\# \tau_3^{-1}(p)=
k \, b(k)$.

We set $\Sigma_p= \tau_3^{-1}(p)$. The above $p$ is a 
general point on $C^{\prime}$. We consider the
pencils $\gamma_1^{\prime}, \ldots, \gamma_{a(k)}^{\prime}$ of degree $k+1$
on $C^{\prime}$ with $\gamma_i^{\prime} \geq 2p$. Let
$$
\Sigma_p^{\prime}=\{ r^{\prime} \in C^{\prime} : 
\text{ $\gamma^{\prime}_i \geq 2p+r^{\prime}$  
for some $1\leq i \leq a(k)$ } \}.
$$
Then by the result of Harris (\cite{H}, p.\ 44) 
we have $\# \Sigma_p^{\prime}=(k-1) a(k)$ and moreover, if we define
$$
\Sigma_p^{\prime\prime}=\{
r^{\prime\prime} \in C^{\prime} : 
\text{ $\gamma^{\prime}_i \geq p+2r^{\prime\prime}$  
for some $1\leq i \leq c(k)$ } \}
$$
we have by the shape of $a(k)$, $b(k)$ and $c(k)$ that
$ \# \Sigma_p > \# \Sigma_p^{\prime} + \# \Sigma_p^{\prime\prime}$.
Then we can choose a point $q$  in $\Sigma_p -(\Sigma_p^{\prime} \cup
\Sigma_p^{\prime\prime})$ and by taking for $\gamma$ the unique 
$\gamma_i$ such that $(p,q) \in T_{\gamma_i}$
the pencil $\gamma$ and the points $p$ and $q$ satisfy 
the conditions of our lemma.
\end{proof}
Note the similarity of the argument with considerations of Harris
in \cite{H}, p.\ 458.

We now work out the case of $E_3$. After completing that case we give the
modifications in the proof to make it work for $E_2$ too. 

We now take a generic curve $C'$ of genus $2k-1$ and a
pencil of degree $k+1$ of type $(3)$,  
say $\gamma$, on $C'$ and a couple of points $p,q$ as in the lemma.
Then the nodal curve $C= C'/(p \sim q)$ determines
a point $[C]$ of $\overline{M}_g$ with $g=2k$ and this point lies 
on the divisor $\Delta_0$.

\begin{proposition}
The set-theoretic fibre  of the map
$\pi': \overline{H}_{k+1,6k-2,3}/{\mathbb S}_{6k-2} \to D_3$ over the point $[C]$
consists of one point.
\end{proposition}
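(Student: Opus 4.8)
The plan is to identify the points of the fibre with admissible covers of type $(3)$ whose stabilised source is the one-nodal curve $C=C'/(p\sim q)$, and then to show that the two conditions imposed on the pair $(p,q)$ in Lemma \ref{abclemma} force such a cover to be unique.

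First I would pin down the \emph{expected} cover. Since the normalisation $C'$ of $C$ is a general smooth curve of genus $2k-1$, it must occur as a single smooth component of the source $X$, mapping to one component of a reducible target $P=\mathbb{P}_1\cup\mathbb{P}_2$, say to $\mathbb{P}_2$, by a pencil $\gamma=g^1_{k+1}$. In order that stabilisation reglue $p$ and $q$ into the node of $C$, the points $p,q$ must lie in a common fibre of $\gamma$, that is $\gamma\ge p+q$, and over $\mathbb{P}_1$ the cover has to consist of a $2{:}1$ rational bridge $C_1$ meeting $C'$ in exactly $p$ and $q$ together with $k-1$ sections, exactly as in Proposition \ref{description} ii). The triple ramification is then carried by $C'$, so that $\gamma$ is of type $(3)$; this is precisely the configuration produced in Lemma \ref{abclemma}.

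Next I would establish uniqueness of this \emph{good} cover. Its entire datum is a type-$(3)$ pencil $\gamma$ on $C'$ with $\gamma\ge p+q$, and part (1) of Lemma \ref{abclemma} says there is exactly one such $\gamma$. Given $\gamma$ the cover is rigid: the fibre of $\gamma$ through $p$ and $q$ is the unique fibre containing $p+q$, its remaining points $r_1,\dots,r_{k-1}$ are determined, the double cover of $\mathbb{P}_1$ branched at the two collided branch points is unique and carries the involution of Remark \ref{hurwitznumber}, and the sections are forced; the positions of the two branch points on $\mathbb{P}_1$ are irrelevant modulo $\mathrm{Aut}(\mathbb{P}^1)$ and the quotient by $\mathbb{S}_{6k-2}$. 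Hence the good covers contribute a single point to the fibre.

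The main obstacle, and the heart of the argument, is to exclude every other admissible cover stabilising to $C$. Because $C$ carries a single non-separating node and $C'$ is general, I would first reduce to the target $\mathbb{P}_1\cup\mathbb{P}_2$ with $C'$ lying over one component: a chain with more components would distribute the $6k-1$ branch points so as either to create extra nodes on the stabilised source or to drop the moduli of $C'$, against its generality. What remains is to control the distribution of ramification, the delicate point being whether the triple ramification can be carried by a rational component over $\mathbb{P}_1$ rather than by $C'$. I would show that any component over $\mathbb{P}_1$ which both meets $C'$ in the two preimages of the node and is ramified over $s$ would, by the matching of ramification profiles required for an admissible cover, force $p$ or $q$ to be a ramification point of the induced pencil on $C'$; together with $\gamma\ge p+q$ this gives $\gamma\ge 2p+q$ or $\gamma\ge p+2q$, which is ruled out by part (2) of Lemma \ref{abclemma}. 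Consequently the bridge must be the unramified $2{:}1$ curve, the points $p,q$ are unramified, and the triple point is forced onto $C'$, bringing us back to the good cover. Combining these steps shows the set-theoretic fibre is a single point.
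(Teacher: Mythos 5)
Your overall strategy is the same as the paper's: describe the expected cover, reduce to a two-component target, then do a case analysis on where the triple ramification point sits, using Lemma \ref{abclemma}. But your case analysis is incomplete, and this is a genuine gap. You exclude only the possibility that the triple point is carried by the \emph{bridge} (the component over $\mathbb{P}_1$ meeting $C'$ at both $p$ and $q$), and then conclude that ``the triple point is forced onto $C'$.'' This overlooks a third possibility: the triple ramification point can lie on one of the rational \emph{tails} $R_j$ over $\mathbb{P}_1$, i.e.\ a component meeting $C'$ at a single point $q_j$. That configuration is combinatorially consistent (for $k\geq 4$): keep the bridge as the unramified $2{:}1$ curve through $p$ and $q$, take one tail of degree $3$ totally ramified over $s$ and over the marked point $p_1\in\mathbb{P}_1$, and let the remaining tails have degree $1$; the source still stabilises to $C=C'/(p\sim q)$, so this is a bona fide candidate point of the fibre that must be ruled out. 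Crucially, in this configuration $p$ and $q$ are unramified on $C'$, so neither part (1) nor part (2) of Lemma \ref{abclemma} applies; the dichotomy ``triple point on the bridge or on $C'$'' on which your last paragraph rests is exactly where the proof breaks.

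The paper closes this case as follows: by admissibility the tail's total ramification over $s$ forces $q_j$ to be a ramification point of index $\geq 3$ of the pencil $\gamma'$ induced on $C'$, so that $\gamma'$ is itself a pencil of type $(3)$ whose fibre through $p$ contains a ramification point. This contradicts not the \emph{statement} but the \emph{proof} of Lemma \ref{abclemma}, where $(p,q)$ is chosen so that $p$ lies in no fibre of any type-$(3)$ pencil containing a ramification point (the genericity condition $\# \tau_3^{-1}(p)=k\, b(k)$). To repair your argument you must either invoke that genericity explicitly or strengthen the lemma you cite. A smaller point: your reduction to a two-component target (``would drop the moduli of $C'$'') is only a sketch; the paper makes it precise by a dimension count, comparing the dimension $6k-5$ of the locus of such $[C]$ with the bound $\leq 6k-6$ for admissible covers whose target has more than two components.
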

\begin{proof}
We first describe the admissible cover that represents the unique point
of the fibre. It is the admissible cover $X\to {\PP}_1 \cup {\PP}_2$ 
with ${\PP}_1 \cup {\PP}_2$ the rational curve consisting of two copies
of ${\PP}^1$ intersecting transversally in one point $s$. Over ${\PP}_2$
the curve $X$ has a component $C'$ with a covering $C' \to {\PP}_2$ 
determined by $\gamma$ and the fibre over $s$ contains $p$ and $q$. 
Over ${\PP}_1$ the curve $X$ is the union of a rational curve $R$ which
is a double cover of ${\PP}_1$ intersecting $C'$ at the points $p$ and $q$
with no ramification at these points and having two simple marked branch 
points and $k-1$ rational curves $R_i$ mapping isomorphically to ${\PP}_1$ and
intersecting $C'$ at the remaining $k-1$ points of the fibre over $s$
different from $p$ and $q$.

We now analyze the uniqueness.
The locus of $[C]$ as constructed above has dimension $6k-5$: 
the curve $C'$ is generic of genus $2k-1$ so it contributes $6k-6$ 
to the dimension
and the pair $(p,q)$ is a generic point of the trace curve $T_{\gamma}$ 
in $C'\times C'$ so it contributes~$1$. (Note that $p$ was chosen
general on $C$ and that results in finitely many choices for $q$.)
Since the locus of the admissible covers in $\overline{H}_{k+1, 6k-2,3}$ 
mapping to a rational curve with more than two components has dimension 
$\leq 6k-6$ we conclude that an admissible cover in  
$\overline{H}_{k+1, 6k-2,3}$ mapping to $[C]$ will correspond to a cover 
of a rational curve with exactly two components. 

Such an admissible cover has by definition a single triple ramification 
point over a branch point $p_1$ lying on $\PP_1$ or on $\PP_2$ and not on their
intersection. In order to map to $[C]$, it should contain over  
$\PP_2$ the curve $C'$ and over $\PP_1$ a rational component $R$ 
intersecting $C'$ exactly at the points $p, q$ and 
other rational components $R_j$, each of which intersects $C^{\prime}$
at a unique point $q_j$. 
Since the gonality of the generic curve of genus $2k-1$ is $k+1$
(i.e., the minimum degree of a non-constant map of $C'$ to $\PP_2$), 
there is  no room for other 
rational components over $\PP_2$.

We distinguish two cases: (i) $p_1 \in {\PP}_2$; (ii) $p_1 \in {\PP}_1$.
In the first case, if $p_1 \in {\PP}_2$ then the map $C^{\prime}
\to {\PP}_2$ is of type $(3)$ and by the choice of $p$ and $q$ it coincides
with our $\gamma$. Then we find that $R \to {\PP}_1$ is a $2:1$ covering
and the remaining components $R_j$ map isomorphically to ${\PP}_1$. We thus
retrieve the cover $X$ described in the first paragraph of our proof.

In the second case, if $p_1 \in {\PP}_1$ then $C^{\prime} \to {\PP}_2$
is described by a degree $k+1$ pencil $\gamma^{\prime}$. Then either $R$ or
one of the $R_j$ contains a ramification point of degree~$3$ lying
over $p_1$. If this ramification point lies on $R$ then $\gamma^{\prime}$
has the property that $\gamma^{\prime} \geq 2p+q$  or $\gamma^{\prime}
\geq p+2q$ which is excluded by  lemma \ref{abclemma}. 
If some $R_j$ contains this
ramification point then $q_j$ has ramification degree $\geq 3$ 
which contradicts the generality of $(p,q)$.
\end{proof}

We now do the $E_2$ case which is similar. For this we need the fact
that for a general curve $C^{\prime}$ of genus $2k-1$ the number of
pencils on $C^{\prime}$ of degree $k+1$ and type $(2,2)$ equals
$$
d(k)= 12 \frac{(k-1)(k-2)}{k} {2k \choose k+1}.
$$
This can be calculated as in Harris \cite{H}.
We also need an analogue of lemma \ref{abclemma}.

\begin{lemma}\label{dlemma}
Let $C^{\prime}$ be a general curve of genus 
$2k-1$ and $p$ a general point of $C^{\prime}$.
Then there exists a point $q$ on $C^{\prime}$ such that
\begin{enumerate}
\item{} there exists a unique pencil $\delta$ on $C^{\prime}$ of degree $k+1$
and type $(2,2)$ with $\delta \geq p+q$;
\item{} there does not exist a pair $(\delta^{\prime},q^{\prime})$ with
$\delta'$ a pencil of degree $k+1$ 
and a point $q^{\prime}$ on $C^{\prime}$ with $\delta^{\prime} 
\geq p+q+2q^{\prime}$.
\end{enumerate}
\end{lemma}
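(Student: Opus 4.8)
The plan is to mirror the argument of Lemma \ref{abclemma} almost verbatim, now using the type $(2,2)$ pencils in place of the type $(3)$ pencils. The idea is to produce a single point $q$ that lies on one of the relevant trace curves (forcing existence of $\delta$) but avoids certain exceptional loci of strictly smaller cardinality (forcing uniqueness and the non-degeneracy condition (2)). First I would introduce the $d(k)$ pencils $\delta_1,\ldots,\delta_{d(k)}$ of degree $k+1$ and type $(2,2)$ on $C'$, and form the union of their trace curves $T_{2,2}=\cup_i T_{\delta_i}\subset C'\times C'$. Each $T_{\delta_i}$ is irreducible by Lemma \ref{trace_curve_irr}, and its projection $\tau$ to the first factor is finite of degree $k$ once we discard ramification fibres. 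I would then fix $p$ general and set $\Sigma_p=\tau^{-1}(p)$, which will consist of $k\,d(k)$ distinct points, to serve as the pool of candidate values for $q$.

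Next I would carve out the points of $\Sigma_p$ that must be excluded. Condition (2) asks us to avoid any $q$ for which some degree $k+1$ pencil $\delta'$ satisfies $\delta'\geq p+q+2q'$ for some $q'$. To control this I would count, by a Harris-style argument, the finite set of pairs $(\delta',q')$ with $\delta'\geq p+2q'$ (these are the analogues of the $c(k)$ and $a(k)$ data), and from each such pair the finitely many additional points $q$ in the fibre of $\delta'$ over the same branch point; collecting these yields an exceptional set $\Sigma_p''$ of cardinality bounded by an explicit polynomial multiple of $\binom{2k}{k+1}$. Likewise I would discard the points of $\Sigma_p$ lying over multiple points of $T_{2,2}$ or in fibres containing extra ramification, so that the chosen $\delta$ is forced to be unique. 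The key inequality to verify is $\#\Sigma_p > \#\Sigma_p''$ (plus the negligible discarded loci), which follows because $d(k)$ grows like a higher-degree polynomial in $k$ than the exclusion count, exactly as the comparison of $a(k),b(k),c(k)$ worked in Lemma \ref{abclemma}.

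Having established that inequality, I would simply choose $q\in\Sigma_p$ outside the exceptional set. By construction $(p,q)$ lies on exactly one $T_{\delta_i}$, giving the unique $\delta$ of condition (1), and the avoidance guarantees condition (2). The main obstacle I anticipate is the bookkeeping in the enumerative count for $\Sigma_p''$: one must verify that the locus of $(\delta',q')$ with $\delta'\geq p+2q'$ is genuinely finite for general $(C',p)$ and extract the correct leading term, so that the crucial strict inequality $\#\Sigma_p>\#\Sigma_p''$ holds for all relevant $k$. This is a Brill–Noether dimension count combined with the formulas of Harris \cite{H}, and the delicate point is making sure no positive-dimensional family of such $(\delta',q')$ appears; once that finiteness and the leading-order comparison are in hand, the rest of the proof is a direct transcription of the earlier lemma.
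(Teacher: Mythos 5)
Your proposal is correct and follows essentially the same route as the paper's own proof: form the union $T_2$ of the trace curves of the $d(k)$ type $(2,2)$ pencils, take the fibre $S_p=\tau_2^{-1}(p)$ of cardinality $k\,d(k)$ over a general $p$ avoiding multiple points and ramified fibres, and exclude the set of points swept out by the $c(k)$ Harris pairs $(\delta',q')$ with $\delta'\geq p+2q'$ (of cardinality $(k-2)c(k)$ in the paper), the strict inequality $k\,d(k)>(k-2)c(k)$ then yielding the desired $q$. The finiteness of the pairs $(\delta',q')$ that you flag as the delicate point is not reproved in the paper either; it is exactly the quoted result of Harris giving the count $c(k)$, already used in Lemma \ref{abclemma}.
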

\begin{proof}
Let $\delta_j$ with $j=1,\ldots,d(k)$ be the type $(2,2)$ pencils of degree
$k+1$ and let $T_2= \cup_{j=1}^{d(k)} T_{\delta_j}$ be the union of
the trace curves. We now choose a pair $(p,q) \in T_2$ which is sufficiently
general, i.e., $p$ is not contained in the image under the first projection
$\tau_2 : T_2 \to C^{\prime}$ of any multiple point of $T_2$ and $T_3$ (as
defined in lemma \ref{abclemma}) and $p$ is not contained in any fibre
of a $\gamma_i$ (as in lemma \ref{abclemma}) or a $\delta_j$ containing
a ramification point; this gives $\# \tau_2^{-1}(p)= k\, d(k)$.

We now set $S_p=\tau_2^{-1}(p)$. We let $(\delta^{\prime}_j,q_j^{\prime})$
for $j=1,\ldots,c(k)$ be the pairs of pencils $\delta_j^{\prime}$
of degree $k+1$ and points
$q_j^{\prime}$ on $C^{\prime}$ with $\delta_j^{\prime} \geq p+2q_j^{\prime}$.
Now we define
$$
S_p^{\prime}=\{ q^{\prime} \in C^{\prime} : 
\text{ there exists a $j$ such that $\delta_j^{\prime} \geq p+2q_j^{\prime}+q^{\prime}$ } \}.
$$
We have $\# S_p^{\prime}=(k-2)c(k)$ and we see 
using the shape of $c(k)$ and $d(k)$ that $\# S_p > \# S_p^{\prime}$.
We can now choose a point $q$ in $S_p-S_p^{\prime}$ and the unique $\delta_j$
with $1 \leq j \leq d(k)$ such that $(p,q) \in T_{\delta_j}$.
This finishes the proof of the lemma.
\end{proof}

We take a generic curve $C^{\prime}$ of genus $2k-1$ with a pencil $\delta$
of degree $k+1$ and type $(2,2)$ and a couple $(p,q)$ of points as in lemma
\ref{dlemma}. We get a nodal curve $C=C^{\prime}/(p \sim q)$ and a point
$[C]$ on the boundary $\Delta_0$ of $\overline{M}_g$.

\begin{proposition}
The set-theoretic fibre  of the map
$\pi': \overline{H}_{k+1,6k-2,2}/{\mathbb S}_{6k-2} \to D_2$ over the point $[C]$
consists of one point.
\end{proposition}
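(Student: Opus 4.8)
The plan is to mirror almost verbatim the proof just given for the $E_3$ case (the Proposition immediately preceding this statement), adapting it to the type $(2,2)$ situation. The target point $[C]$ is the stable curve obtained from the generic genus $2k-1$ curve $C'$ by gluing the two general points $p \sim q$ coming from the unique type $(2,2)$ pencil $\delta$ produced by Lemma \ref{dlemma}. As in the $E_3$ case, the first step is a \emph{dimension count}: the locus of such $[C]$ has dimension $6k-5$ (namely $6k-6$ from the moduli of $C'$ plus $1$ from the choice of the pair $(p,q)$ on the trace curve $T_\delta$), whereas the admissible covers in $\overline{H}_{k+1,6k-2,2}$ mapping to a rational base with more than two components form a locus of dimension $\leq 6k-6$. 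Hence any admissible cover over $[C]$ must map to a rational curve $\PP_1 \cup \PP_2$ with \emph{exactly two} components, and by the gonality of the generic curve of genus $2k-1$ being $k+1$ there can be no extra rational component over $\PP_2$ apart from $C'$ itself.

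The second step is to \emph{describe the candidate admissible cover} and verify it lies in the fibre: over $\PP_2$ we place $C' \to \PP_2$ determined by $\delta$, with the fibre over $s$ containing $p$ and $q$; over $\PP_1$ we attach two rational curves $S_1,S_2$, each a degree $2$ cover of $\PP_1$ meeting $C'$ at $p$ (resp.\ $q$) with ramification there, together with $k-3$ rational tails $R_i$ mapping isomorphically to $\PP_1$ and meeting $C'$ at the remaining $k-3$ points of the fibre over $s$. This is exactly the picture of Proposition \ref{description2} i), and one checks it is a legitimate degree $k+1$ admissible cover of the required type $(2,2)$ marked over $[C]$.

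The third step is the \emph{uniqueness analysis}, again following the two-case split of the $E_3$ proof. The two type $(2,2)$ double ramification points must lie over a branch point $p_1$ of $\PP_1 \cup \PP_2$ lying off the node $s$; either $p_1 \in \PP_2$ or $p_1 \in \PP_1$. If $p_1 \in \PP_2$, then $C' \to \PP_2$ is itself of type $(2,2)$ and, by the choice of $(p,q)$ in Lemma \ref{dlemma}, it must coincide with $\delta$, forcing the cover to be the one just described. If $p_1 \in \PP_1$, then $C' \to \PP_2$ is described by some pencil $\delta'$ and one of the rational tails over $\PP_1$ must carry the degree $2$ ramification over $p_1$; this would either force $\delta' \geq p + q + 2q'$ for some point $q'$ (excluded by condition (2) of Lemma \ref{dlemma}) or force one of the isomorphically mapping tails $R_i$ to acquire extra ramification, contradicting the generality of $(p,q)$.

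The main obstacle I expect is the bookkeeping in the $p_1 \in \PP_1$ case, because the type $(2,2)$ data has two separate double points rather than a single triple point, so the case analysis of which rational tail carries the excess ramification is slightly more delicate than for $E_3$; the generality conditions of Lemma \ref{dlemma} are precisely engineered to rule out every sub-case, so the argument should go through cleanly once each possibility is matched against conditions (1) and (2) of that lemma. One extra subtlety, already flagged in Remark \ref{hurwitznumber} and in the $\alpha$-marking of Proposition \ref{description2} i), is the labelling of the two degree $2$ tails; since we are computing the fibre of $\pi'$ over $D_2$ (with the branch points unordered), this marking is already accounted for in the passage to $\overline{H}_{k+1,6k-2,2}/{\mathbb S}_{6k-2}$, so it does not produce additional points in the fibre.
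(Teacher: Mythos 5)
Your overall strategy (mirror the $E_3$ argument, using Lemma \ref{dlemma} in place of Lemma \ref{abclemma}) is indeed the paper's approach, but your second step describes the wrong admissible cover, and this is a genuine error rather than a slip of notation. You attach two rational curves $S_1,S_2$, each mapping $2:1$ to ${\PP}_1$ and meeting $C'$ at $p$ (resp.\ $q$) \emph{with ramification there}, plus $k-3$ tails. That object is the generic member of $E_2^{\Lambda,\alpha}$ from Proposition \ref{description2} i), which lies over the \emph{interior} $M_g$; it is not the cover sitting over the boundary point $[C]=[C'/(p\sim q)]\in\Delta_0$. Concretely: (a) since each of $S_1,S_2$ and the tails meets $C'$ in a single point, the dual graph of your curve is a tree, so it has arithmetic genus $2k-1$, not $2k$, and it is not even a point of $\overline{H}_{k+1,6k-2,2}$; (b) after contracting the rational components, $p$ and $q$ are not identified, so your cover would map to $[C']$, not to $[C]$; (c) admissibility would force $\delta\geq 2p$ and $\delta\geq 2q$, contradicting precisely the choice of $(p,q)$ in Lemma \ref{dlemma}, which guarantees the fibre through $p$ and $q$ contains no ramification point. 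The correct cover has, over ${\PP}_1$, a \emph{single} rational curve $R$ mapping $2:1$ and meeting $C'$ at \emph{both} $p$ and $q$ (unramified there, with its two simple branch points giving the two marked points on ${\PP}_1$), together with $k-1$ tails; the type $(2,2)$ ramification sits on $C'$ itself, over a branch point of ${\PP}_2$. It is the loop created by $R$ through $p$ and $q$ that raises the genus to $2k$ and makes the stable model equal to $C'/(p\sim q)$.

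This error propagates into your uniqueness step: since your picture has no component meeting $C'$ in both $p$ and $q$, you cannot run the case analysis correctly, and indeed you defer exactly the part that requires work. In the case $p_1\in{\PP}_1$ the paper distinguishes four cases according to where the two double points $r_1,r_2$ lie: both on $R$ (then $\deg(R\to{\PP}_1)\geq 4$, so the ramification degrees at $p$ and $q$ sum to at least $4$, contradicting the generality of $(p,q)$); one on $R$ and one on a tail $R_j$ (then the ramification at $q_j=R_j\cap C'$ is at least $2$, giving $\delta'\geq p+q+2q_j$, excluded by condition (2) of Lemma \ref{dlemma}); on two distinct tails; or both on one tail (ramification $\geq 4$ at $q_j$, contradicting the generality of $C'$). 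Your two-possibility split neither covers all four configurations nor can be stated in your setup, so the proof as written does not go through; with the corrected description of the cover, the four-case analysis above completes it.
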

\begin{proof}
The decription of the admissible cover representing the unique point
is completely similar to the $D_3$ case. We analyze again the uniqueness.
As before a point in the fibre corresponds to a cover of a rational
curve with two irreducible components ${\PP}_1$ and ${\PP}_2$. Over ${\PP}_2$
we have a cover $C^{\prime} \to {\PP}_2$ and over ${\PP}_1$ a cover $R\to {\PP}_1$ and a number of smooth rational curves $R_j$ mapping with finite degree
to ${\PP}_1$. 

Let $p_1$ be the point over which the ramification of type $(2,2)$ occurs.
If $p_1 \in {\PP}_2$ we find as above that the admissible cover is the one
we want. If $p_1 \in {\PP}_1$ then let $r_1,r_2$ be the ramification points
of type $(2,2)$ over $p_1$. We have the following cases:
\begin{enumerate}
\item{} $r_1,r_2 \in R$;
\item{} $r_1 \in R$, $r_2 \in R_j$ for some $j$;
\item{} $r_1 \in R_{j_1}$ and $r_2 \in R_{j_2}$  for some $j_1\neq j_2$;
\item{} $r_1,r_2 \in R_j$ for some $j$.
\end{enumerate}
In case 1) we conclude that $R \to {\PP}_1$ has degree $\geq 4$, hence the sum
of the ramification degrees at $p$ and $q$ is at least $4$, contradicting the generality of $p$ and $q$.
In case 2) the degree $R_j \to {\PP}_1$ is at least $2$, hence the ramification
degree at $q_j$ with $q_j=R_j \cap C^{\prime}$ is at least $2$, contradicting
lemma \ref{dlemma}. The other cases are easy because in case 3) the ramification degrees of $q_{j_1}$ and $q_{j_2}$ are at least $2$, contradicting the
choice of $(p,q)$, while in case 4) the ramification degree at $q_j$ is
at least $4$ which contradicts the generality of $C^{\prime}$.
Thus we are done in all cases.  
\end{proof}

In order to prove the Theorem we have to analyze the multiplicity.

Our local analysis of the map $\pi :  \overline{H}_{k+1, 6k-2,3} 
\to D_3\subset \overline{M}_g$ over the point $[C]$ is similar to the
one described in \cite{HMu}, pages 76-78 for the case of admissible covers 
with simple branching only. 
For a similar description over Hurwitz schemes of other types, 
see \cite{C}, Section 3 and \cite{D1} p.\ 46.

We take a point $x$ in the fiber of the covering 
$\pi :  \overline{H}_{k+1, 6k-2,3} \to D_3$ over $[C]$. 
As we have seen,  $x$ corresponds to a covering of the form 
$X$ defined above - modulo renumbering of the marked simple branch points - 
and it is a smooth point of the space $\overline{H}_{k+1, 6k-2,3}$.
By the uniqueness we proved above, in a neighborhood of the point $[C]$ 
the variety $D_3$  is the image via the map $\pi$ of a 
neighborhood of the point~$x$. 
We choose a marking of all branch points by 
marking points $p_2,p_3$ on $\PP_1$.
If $\sigma $ is the permutation of $ {\mathbb S}_{2k-2}$ 
interchanging $p_2$ and $p_3$, then $\sigma x=x$, cf.\ Remark 
\ref{hurwitznumber}.
The fixed locus of the permutation  $\sigma $ in the neighborhood of $x$ 
is a divisor $\Delta$.
The complement of $\Delta $ in the neighborhood of $x$ corresponds 
to coverings of smooth curves. 
Therefore, locally at $[C]$, the image of $\Delta $ corresponds to 
the intersection of $D_3$ with the boundary divisor $\Delta _0$  of 
$\overline{M}_g$.   
The map $ \tau: \overline{H}_{k+1, 6k-2,3} \to 
\overline{H}_{k+1, 6k-2,3}/\langle \sigma \rangle$ 
is locally around $x'=\tau(x)$ a degree $2$ covering with ramification 
locus  $\Delta$, see  \cite{HMu}, bottom of p.\ 76.  

As is shown in  \cite{HMu}, p.\ 77, the induced map 
$\lambda : \overline{H}_{k+1, 6k-2,3}/\langle \sigma \rangle \to D_3 
\subset \overline{M}_g$, 
has the property that $\lambda ^*(\Delta_0) = \tau(\Delta) $ with multiplicity one. 
This implies that $D_3$ and $\Delta _0$ meet transversally in the neighborhood 
of $[C]$. Since $[C]$ is locally a generic point of the 
intersection of $D_3$ with $\Delta _0$,  we conclude that it is a smooth 
point of $D_3$. Moreover, since $\lambda^*(\Delta_0) =\tau(\Delta )$ 
with multiplicity one, we find that 
the ramification index of $x'$, which is a generic point,
equals $1$. Hence the ramification index at the point $x$ of the map 
$\pi :  \overline{H}_{k+1, 6k-2,3} \to D_3\subset \overline{M}_g$ is $2$ 
and this finishes the proof of the Theorem for the case of $E_3$.
The analysis for the $E_2$ case is similar. 
\end{section}
\begin{section}{The Calculation of the Class}

We shall now carry out the calculation of the class of $D_2$.
We use the calculation of the class of $D_3$ due to Harris 
in \cite{H}, p.\ 466 and
the formula of Kokotov, Korotkin and Zograf in \cite{KKZ}.
Harris gives the class of $D_3$ (for $k\geq 2$) as
$$
[D_3]= 12 \frac{(2k-3)!}{(k+1)!(k-2)!} \left[
(12 k^2+46k -8) \lambda-b_0 \delta_0 -\sum_{j=1}^k b_j \delta_j \right],
$$ 
with
$b_0=2k^2+4k-1$ and for $b_j=2j (2k-j)(3k+2)$ for $j>0$.
We can rewrite this as
$$
[D_3]= \frac{3}{2k-1} N 
\left[
2(k+4)(6k-1) \lambda-b_0 \delta_0 -\sum_{j=1}^k b_j \delta_j \right],
$$
where $N={2k\choose k+1}/k= {2k \choose k}/(k+1)$.

In their paper \cite{KKZ} Kokotov, Korotkin and Zograf give a formula
for the (first Chern) class $\lambda_H$ of the Hodge bundle on $\bH_{g,d}$
(which is the pull back of the class $\lambda$ of the Hodge bundle
on $\overline{M}_g$).
In our case their formula (Thm.\ 3, formula (3.13)) reads
$$
\lambda_H= \sum_{b_2}^{3k} \sum_{\mu} m(\mu) \left[
\frac{b_2(6k-b_2)}{8(6k-1)}-\frac{1}{12}(k+1-
\sum_i \frac{1}{m_i}) \right] \, \delta^{(b_2)}_\mu,
$$
where $b_2$ is the number of marked point on $\PP_2$, 
$\mu=(m_i)$'s are the ramifications over $s$, $\delta^{(b_2)}_\mu$ 
the corresponding boundary divisor and $m(\mu)$ is the least
common multiple of the $m_i$'s; cf.\ the proof of Thm.\ 3 of loc.\ cit.

We apply the push forward $\pi_*$ to this formula and plug in Harris result.
For $E_0$ we have $k+1$ points over $s$ of ramification degree
$m_i=1$, hence $m(\mu)=1$. For $E_2$ we have $k-3$  points of ramification  
degree $1$ and two of ramification  degree $2$, so $m(\mu)=2$. Similarly, for $E_3$
we have $k-2$ points of ramification  degree $1$ and one of ramification degree $3$, so
$m(\mu)=3$.  
For $E_{j,c}$ we have $k-j+2c$ points over $s$ with 
ramifications degree $1$ and one of ramification degree $j+1-2c$,
so $m(\mu)=j+1-2c$.
This yields:
\begin{proposition}\label{pushforward} $\pi_*(\lambda_H)$
of the Hodge class $\lambda_H$ is given by
$$
\begin{aligned}
&
\frac{(3k-1)}{2(6k-1)}\pi_*[E_0] -\frac{1}{2(6k-1)}\pi_*[E_2]+ 
\frac{3k-5}{6(6k-1)} \pi_*[E_3] +\\
&\sum_{j=1}^{k}\sum_{c=0}^{[j/2]} (j+1-2c)\left[
\frac{(6k-3j)(3j)}{8(6k-1)}-\frac{1}{12}(j+1-2c -\frac{1}{j+1-2c})\right]
\pi_*[E_j,c]\\
\end{aligned}
$$
\end{proposition}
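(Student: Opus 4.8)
The plan is to obtain $\pi_*(\lambda_H)$ by pushing the Kokotov--Korotkin--Zograf expression forward term by term, using the observation that $\pi_*$ annihilates every boundary divisor of $\overline{H}_{g,d}$ whose image has codimension $\geq 2$ in $\bM_g$. Since $\pi$ is generically finite between spaces of the same dimension $3g-3$, a prime boundary divisor $\delta^{(b_2)}_\mu$ of $\overline{H}_{g,d}$ contributes a nonzero multiple of $[\pi(\delta^{(b_2)}_\mu)]$ precisely when $\pi$ restricted to it is generically finite onto a divisor, and contributes $0$ otherwise. By Propositions \ref{description} and \ref{description2} together with Proposition \ref{degree}, the boundary divisors that map dominantly onto a divisor of $\bM_g$ are exactly $E_0$, the $E_{j,c}$ with $1\leq j\leq k$ and $0\leq c\leq [j/2]$ (mapping to $\Delta_j$), and $E_2,E_3$ (mapping into the interior). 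Hence, after applying $\pi_*$, all remaining boundary terms of the KKZ formula drop out and $\pi_*(\lambda_H)$ is a combination of $\pi_*[E_0]$, $\pi_*[E_{j,c}]$, $\pi_*[E_2]$ and $\pi_*[E_3]$ alone.

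First I would match each surviving $E$ with the pair $(b_2,\mu)$ indexing its KKZ summand, reading off from the descriptions in Propositions \ref{description} and \ref{description2} the number $b_2$ of marked branch points lying on $\PP_2$ and the ramification partition $\mu=(m_i)$ of $d=k+1$ over the node $s$. For $E_{j,c}$ one has $b_2=3j$ and, since the node $p$ carries ramification $j+1-2c$ while the remaining $k-j+2c$ points over $s$ are unramified, $\mu=(j+1-2c,1^{k-j+2c})$; for $E_0$ one has $\mu=(1^{k+1})$; for $E_2$ one has $\mu=(2,2,1^{k-3})$; and for $E_3$ one has $\mu=(3,1^{k-2})$. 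In the last three cases the genus part sits over one component and $\PP_2$ carries either $2$ or $6k-2$ marked points; since the KKZ coefficient depends on $b_2$ only through the symmetric quantity $b_2(6k-b_2)$, I may freely use $b_2=2$ there.

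Next I would compute, for each, the two combinatorial ingredients $m(\mu)=\mathrm{lcm}(m_i)$ and $k+1-\sum_i 1/m_i$, obtaining $m(\mu)=j+1-2c$ with $k+1-\sum_i 1/m_i=(j+1-2c)-1/(j+1-2c)$ for $E_{j,c}$; $m(\mu)=1$ with $k+1-\sum_i 1/m_i=0$ for $E_0$; $m(\mu)=2$ with value $3$ for $E_2$; and $m(\mu)=3$ with value $8/3$ for $E_3$. Substituting these into the KKZ coefficient $m(\mu)\bigl[\tfrac{b_2(6k-b_2)}{8(6k-1)}-\tfrac{1}{12}(k+1-\sum_i 1/m_i)\bigr]$ and simplifying produces $\tfrac{3k-1}{2(6k-1)}$ for $E_0$, $-\tfrac{1}{2(6k-1)}$ for $E_2$, $\tfrac{3k-5}{6(6k-1)}$ for $E_3$, and the bracketed $E_{j,c}$-coefficient recorded in the statement; assembling the surviving terms then yields the asserted formula.

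The genuine content, as opposed to bookkeeping, lies in two places. The first is the vanishing step: one must be sure that no boundary divisor other than those listed maps dominantly to a divisor of $\bM_g$, which is exactly what the classification of Sections \ref{section btob} and \ref{section btos} provides, so this reduces to invoking those results rather than reproving them. The second, and more error-prone, is correctly reading the partition $\mu$ over $s$ from each admissible-cover configuration and handling the normalizations---in particular the identification $\Lg\sim\Lg^c$ at $j=k$ (where $b_2=3k$ sits at the top of the KKZ range) and the symmetry $b_2\leftrightarrow 6k-b_2$ used for $E_0,E_2,E_3$---so that no spurious factor of $2$ creeps in. The arithmetic reductions, especially the $E_3$ case with its fractional value $8/3$, require care but are routine.
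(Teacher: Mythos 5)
Your proposal is correct and follows essentially the same route as the paper: push the Kokotov--Korotkin--Zograf formula forward under $\pi_*$, discard all boundary divisors whose images have codimension at least two (using the classification of Propositions \ref{description} and \ref{description2}), read off $(b_2,\mu)$ and $m(\mu)$ for $E_0$, $E_2$, $E_3$, $E_{j,c}$, and substitute into the KKZ coefficient. Your arithmetic (the values $0$, $3$, $8/3$ and $(j+1-2c)-\tfrac{1}{j+1-2c}$ for $k+1-\sum_i 1/m_i$, and the resulting coefficients) matches the paper exactly; you merely make explicit the vanishing step and the $b_2\leftrightarrow 6k-b_2$ symmetry that the paper leaves implicit.
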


Here we have to interpret the classes $\pi_*[E_0],\ldots, \pi_*[E_{j,c}]$
in the right way since we are working on the stack $\overline{M}_g$.
By applying $\pi_*$ with its degree
$\deg(\pi)=(6k)! N$ and using Proposition \ref{degree} and 
Theorem \ref{degreeE} we find
$$
\pi_*(\lambda_H)=\deg(\pi) \lambda_M, \qquad \pi_*[E_0]=
\frac{\deg(\pi)}{2} \delta_0.
$$
Indeed, a generic admissible cover of $E_0$ admits no non-trivial
automorphisms fixing the marked points. (That the degree of $\pi$
restricted to $E_0$ is $\deg (\pi)/2$ is due to the fact that such
an admissible cover allows an involution that does not fix the
marked points.) Similarly, we find 
$\pi_*[E_3]=\frac{(6k)!}{2} [D_3]$, with the class of 
$D_3$ given above. 
Along $E_2$ an admissible cover has a
${\ZZ}/2{\ZZ} \times {\ZZ}/2{\ZZ}$ in its automorphism group preserving
the marked points with the two generators corresponding to the 
covering involutions on $S_1$ and $S_2$ over ${\PP}_1$ 
(see Prop.\ \ref{description}, i). But locally along $E_2$
the infinitesimal defomation space has a normal crossing singularity,
cf.\ Lemma \ref{deformations}.
We go to the normalization and interpret 
the formula of \cite{KKZ} there (cf.\ 
the remarks at the end of \S (3.1) of \cite{KKZ}). Over $E_2$
this is a $2:1$ cover. 
So taking into account these factors $2/2^2$ of $2$ 
we find $\pi_*[E_2]=(6k)!\, [D_2]/2$.

From Proposition \ref{degree} we get for $j>0$
$$
\pi_*[E_{j,c}]=
\frac{(6k)! \, (j+1-2c)^2}{(j+1)(2k-j+1)}{j+1 \choose c}
{2k-j+1 \choose k+1-c} \, \delta_j.
$$
We put for $i\in {\ZZ}_{\geq 1}$
$$
A_i(j)=\frac{1}{(j+1)(2k+1-j)}\sum_{c=0}^{[j/2]} (j+1-2c)^i{j+1 \choose c}
{2k-j+1 \choose k+1-c}\, .
$$
Then with $N={2k \choose k+1}/k$ we have
$$
A_2(j)=N, \qquad 
A_4(j)= \left(1+\frac{3j(2k-j)}{2k-1}\right) N,
$$
while for $A_3(j)$ we get if $j$ is even
$$
A_3(j)=\frac{j(2k-j)+k}{k(k+1)} {j \choose [j/2]} {2k-j\choose k-[j/2]}
$$
and for $j$ odd 
$$
A_3(j)=\frac{(j+1)(2k-j)}{k(k+1)} {j+1 \choose 1+[j/2]}{2k-j-1 \choose k-1-[j/2]}\, .
$$

By multiplying by $2(6k-1)$ and bringing $\pi_*[E_2]$ to the other side
in the equation for $\pi_*(\lambda_H)$ in Proposition \ref{pushforward} 
we get
$$
\begin{aligned}
\pi_*[E_2]= & 
-2(6k-1)\pi_*(\lambda_H)+\frac{3k-5}{3}\pi_*[E_3] +(3k-1)\pi_*[E_0]+ \\
&(6k)! \sum_{j=1}^k  \left[ 
\frac{(6k-3j)(3j)}{4} A_3(j) +\frac{6k-1}{6} (-A_4(j) +A_2(j)) \right]
\delta_j \, .\\
\end{aligned}
$$
Dividing by $(6k)!$ we find
$$
\begin{aligned}
 {[ D_2]}/2= 
&  -2(6k-1)N\lambda_M + \frac{3k-5}{6} [D_3] +\frac{3k-1}{2}N \delta_0 + \\
 &  \sum_{j=1}^k  \left[
 \frac{(6k-3j)(3j)}{4} A_3(j) +\frac{6k-1}{6} (-A_4(j) +A_2(j)) \right]
\delta_j\, . \\
\end{aligned}
$$
Only the first two terms on the right hand side contribute to
the coefficient of $\lambda_M$ and the contribution is
$$
\begin{aligned}
-2(6k-1)N \lambda_M+ \frac{3k-5}{6} \frac{3}{2k-1} N 2 (k+4)(6k-1)\lambda_M=&\\
3N \frac{6k-1}{2k-1}  (k-2)(k+3)\lambda_M \, .&
\\
\end{aligned}
$$
The coefficient of $\delta_0$ comes from the second and third term on the right hand
side. It is
$$
-\frac{3k-5}{6}\frac{3}{2k-1} N (2k^2+4k-1) + (3k-1)\frac{N}{2}= 
-\frac{N}{2k-1} (k-2)(3k^2+4k-1).
$$
The coefficient of $\delta_j$, $j\geq 1$, comes from the second and fourth term on the right hand
side. We get
$$
\begin{aligned}
-\frac{1}{2} N \frac{3k-5}{2k-1} 2j (2k-j) (3k+2) +
\frac{9(2k-j)j}{4} A_3(j) - N\frac{(6k-1)j(2k-j)}{2(2k-1)}= \\ 
-\frac{3\, N j (2k-j)}{2 (2k-1)} (6k^2-4k-7) + \frac{9}{4} j (2k-j) A_3(j).
\end{aligned}
$$
This concludes the proof of the theorem.
\end{section}
\begin{section}{A Final Check}
Our main result reproduces the well-known relation 
$10\lambda=\delta_0+\delta_1$ for $k=1$ and gives zero for $k=2$,
as it should.
It also satisfies the relation $c_{\lambda}+12\, c_0 -c_1=0$ given in
Lemma 3.1 of \cite{H}. But these checks using homogeneous linear relations
leave the possibility of common factor in the coefficients 
$c_{\lambda},c_0,\ldots,c_k$.
To rule this out we consider a test curve in $\overline{M}_g$.
Take a general curve $B$ of genus $g-1$, a general point $p\in B$
and identify in the blow-up of $B\times B$ at $(p,p)$ the diagonal
with the section $\{ p \} \times B$. This gives a family $\pi: S \to B$
of one-nodal curves with 
$$
B\cdot \lambda=0, \quad B\cdot \delta_0= 2-2g, \quad
B \cdot \delta_1=1,\quad  \text{and $B\cdot \delta_j=0$ for $j\geq 2$}.
$$
\begin{lemma}\label{testlemma}
We have $B \cdot D_2 = (k-1)(k-2)(12\, k + 10)N$.
\end{lemma}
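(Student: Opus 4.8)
The plan is to compute $B \cdot D_2$ directly as a geometric count, independently of the formula for $[D_2]$, so that comparing the result with $(2-2g)c_0+c_1$ genuinely rules out a common factor in the coefficients. Since every fibre of the family $S\to B$ is singular, the image of $B$ in $\overline{M}_g$ lies in $\overline{\Delta}_0$ away from the single point $[C_p]\in\Delta_1$ coming from $b=p$. Thus $B\cdot D_2$ is the number, counted with the local intersection multiplicity of $\overline{D_2}$ with the test curve, of points $b\in B$ for which the nodal curve $C_b=B/(p\sim b)$ carries a limiting type-$(2,2)$ $g^1_{k+1}$. I would fix $B$ general of genus $2k-1$ and $p$ general, and organise the count according to the position of the two branches $p,b$ of the node relative to the ramification of a degree $k+1$ pencil on $B$. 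The essential local input is the smoothing model of the node: a node lying in a \emph{general} fibre of a pencil contributes exactly two simple branch points on the smoothed curve (the critical points of $z\mapsto\alpha z+\beta\varepsilon/z$), whereas a node meeting a ramification point produces a triple behaviour.

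The main contribution comes from genuine type-$(2,2)$ pencils on $B$. If $\delta$ is one of the $d(k)=12\frac{(k-1)(k-2)}{k}\binom{2k}{k+1}$ pencils of degree $k+1$ and type $(2,2)$, and $\delta\geq p+b$ with $b$ in the general fibre of $\delta$ through $p$, then the node lies in a general fibre and, by the local model, the smoothing is again of type $(2,2)$; hence $C_b\in\overline{D_2}$. These $b$ are precisely the points with $(p,b)\in T_\delta$, so by the fibre count in the proof of Lemma \ref{dlemma} (namely $\#\tau_2^{-1}(p)=k\,d(k)$) there are $k\,d(k)=12k(k-1)(k-2)N$ of them. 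Using the smoothness of the trace curve at a general point (Section \ref{trace_curves}, Corollary \ref{trace_curves_singularities}) together with the generality of $p$, I would verify that $\overline{D_2}$ meets the test curve transversally at each such $C_b$, so each counts with multiplicity one.

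The remaining contribution is a boundary correction arising as the family of type-$(2,2)$ trace curves degenerates, i.e.\ from configurations in which the fibre of the node meets a ramification point of a \emph{simply} branched pencil. Concretely these are the curves $C_b$ for which $B$ carries a simply branched pencil $\gamma$ of degree $k+1$ with $\gamma\geq p+2q$ for some point $q$ (so the fibre through $p$ contains an extra ramification point $q$), with $b$ one of the remaining $k-2$ points of that fibre. For general $p$ there are $c(k)=5\frac{k-1}{k}\binom{2k}{k+1}$ pairs $(\gamma,q)$ with $\gamma\geq p+2q$, and for each such pair $k-2$ admissible choices of $b$, giving $(k-2)\,c(k)=5(k-1)(k-2)N$ such curves. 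Here the local analysis shows that the second index-$2$ point of the limiting type-$(2,2)$ series is absorbed into the node and depends quadratically on the smoothing parameter — equivalently, these are exactly the points into which the two nodes of the type-$(2,2)$ trace curves of Corollary \ref{trace_curves_singularities} specialise — so $\overline{D_2}$ meets the test curve with multiplicity $2$ at each. This produces a correction $2(k-2)\,c(k)=10(k-1)(k-2)N$.

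Adding the two contributions gives
$$
B\cdot D_2=k\,d(k)+2(k-2)\,c(k)=12k(k-1)(k-2)N+10(k-1)(k-2)N=(k-1)(k-2)(12k+10)N,
$$
as asserted. The main obstacle is the multiplicity analysis underlying the second step: one must prove both that these degenerate configurations genuinely lie in $\overline{D_2}$ (while the $\Delta_1$ fibre over $b=p$ and every other degeneration contribute nothing) and that the correct local intersection multiplicity is $2$. This is precisely the delicate local, limit-linear-series computation around the node, analogous to — but more involved than — the multiplicity-$3$ analysis governing the parallel computation of $B\cdot D_3$ for the triple-ramification divisor, and it is the step I expect to require the most care.
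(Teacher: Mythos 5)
Your proposal is correct and follows essentially the same route as the paper: the paper also writes $B\cdot D_2 = \#S_p + 2\,\#S_p'$ with $S_p$, $S_p'$ as in Lemma \ref{dlemma}, counts $\#S_p = k\,d(k)$ with multiplicity one and $\#S_p' = (k-2)\,c(k)$ with multiplicity two, and sums to $(k-1)(k-2)(12k+10)N$. Your quadratic-smoothing-parameter heuristic for the multiplicity $2$ is the same phenomenon the paper ascribes to ``the involution involved here,'' and both treatments leave that local analysis equally sketched.
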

\begin{proof}
We have $B\cdot D_2= S_p + 2\, S_p^{\prime}$ with  $S_p$ and $S_p^{\prime}$ 
defined in the proof of Lemma \ref{dlemma}. The argument is similar to that of
\cite{H}, Lemma 3.9. Set-theoretically we have $D_2 \cdot B= S_p\cup S_p^{\prime}$.  The argument for the multiplicity of $S_p$ is similar to that of
loc.\ cit. As to the multiplicity of $S_p^{\prime}$, 
an analysis shows that it equals $2$ due to the involution 
involved here.
In the proof of Lemma \ref{dlemma}
we gave the cardinalities: $\#S_p= k\, d(k)$ and 
$\# S_p^{\prime}=(k-2)\, c(k)$. This proves the Lemma.
\end{proof}
On the other hand using the intersection numbers of $B$ with
$\lambda$ and the $\delta_i$ we get by our Theorem
$$
B \cdot D_2 = -2(2k-1)c_0+c_1 = 2(k-1)(k-2) (6k+5) N
$$
in perfect agreement with the Lemma \ref{testlemma}.
\end{section}



\begin{thebibliography}{9999}

\bibitem{ACGH} E.\ Arbarello, M.\ Cornalba, Ph.\ Griffiths, J.\ Harris: 
Geometry of Algebraic Curves I. Grundlehren der mathematischen 
Wissenschaften {\bf 267}, 1985, Springer Verlag.

\bibitem{C} F.\ Cukierman: Families of Weierstrass points 
{\sl Duke Math.\ J.\ \bf 58}, (1989), 317--346.

\bibitem{D1} S.\ Diaz: 
Exceptional Weierstrass points and the divisor on moduli space they define.
{\sl Mem.\ Amer.\ Math.\ Soc.\  \bf 56}, 1985.

\bibitem{D} S.\ Diaz: Tangent spaces in moduli, via deformations 
with applications to Weierstrass points. 
{\sl Duke Math.\ J.\ \bf 51}, (1984), 905--922.

\bibitem{EH} D.\ Eisenbud, J.\ Harris: Limit linear series, Basic theory. 
{\sl Inventiones Mathematicae \bf 85}, (1986), 337--371.

\bibitem{EH2} D.\ Eisenbud, J.\ Harris: Irreducibility of some families 
of linear series with Brill-Noether number $-1$. 
{\sl Ann.\ Scient.\ Ec.\ Norm.\ Sup.\ \bf 22} (1989), 33--53.

\bibitem{FP} C.\ Faber, R.\ Pandharipande: 
Relative maps and tautological classes. 
{\sl Journal of the EMS \bf 7} (2005), 13--49.

\bibitem{GF} G.\ Farkas: The Fermat cubic and special Hurwitz loci
in ${\bM}_g$. {\tt ArXiv:0711.1327}.
Bull.\ Belg.\ Math.\ Soc.\  - Simon Stevin 16, No. 5, 831--851 (2009). 

\bibitem{F} W.\ Fulton: Hurwitz schemes and irreducibility 
of moduli of algebraic curves. 
{\sl The Annals of Mathematics. \bf 90} (1969), 542--575.

\bibitem{GH}  Ph.\ Griffiths, J.\ Harris: On the variety of special 
linear systems on a general algebraic curve. 
{\sl  Duke Math.\ J.\ \bf 47}, 1980, 233--272.

\bibitem{H} J.\ Harris: On the Kodaira dimension of the moduli 
space of curves II. The even-genus case.
{\sl Inventiones Mathematicae \bf 75}, (1984), 437--466.

\bibitem{HMo} J.\ Harris, I.\ Morrison: Moduli of Curves. 
Graduate Texts in Mathematics {\bf 187}, 1998, Springer Verlag.

\bibitem{HMu} J.\ Harris, D.\ Mumford: On the Kodaira dimension of 
the moduli space of curves. {\sl Inventiones Mathematicae \bf 67}, 
(1982), 23--86.

\bibitem{K}  P.\ Kluitman: 
Hurwitz action and finite quotients of braid groups.
{\sl  Contemporary Mathematics \bf 78}, 1988, 299--325.

\bibitem{KKZ} A.\ Kokotov, D.\ Korotkin, P.\  Zograf: 
Isomonodromic tau function on the  space of admissible covers. 
{\tt ArXiv 0912.3909, v3}.



\end{thebibliography}
 \end{document}